\theoremstyle{plain}
    \newtheorem{thm}{Theorem}[section]
    \newtheorem{lem}[thm]{Lemma}
    \newtheorem{prop}[thm]{Proposition}
    \newtheorem{cor}[thm]{Corollary}
\theoremstyle{definition}
    \newtheorem{defn}[thm]{Definition}
    \newtheorem{assumption}[thm]{Assumption}
    \newtheorem{rmk}[thm]{Remark}
    \newtheorem*{ack*}{Acknowledgement}
    \newtheorem*{ques*}{Question}
    \newtheorem{const}[thm]{Construction}
    \newtheorem{eg}[thm]{Example}
\numberwithin{equation}{section}
\newcommand\ip[2]{\langle{#1},{#2}\rangle}
\newcommand\pl{\partial}
\newcommand\oh{\frac{1}{2}}
\newcommand\dd{{\mathrm d}}
\newcommand\w{\wedge}
\newcommand\sm{\sigma}
\newcommand\dt{\delta}
\newcommand\vep{\varepsilon}
\newcommand\vph{\varphi}
\newcommand\om{\omega}
\newcommand\ta{\theta}
\newcommand\gm{\gamma}
\newcommand\kp{\kappa}
\newcommand\af{\alpha}
\newcommand\bt{\beta}
\newcommand\ld{\lambda}
\newcommand\Om{\Omega}
\newcommand\Sm{\Sigma}
\newcommand\Gm{\Gamma}
\newcommand\Ld{\Lambda}
\newcommand\Dt{\Delta}
\newcommand\CA{\mathcal{A}}
\newcommand\CR{\mathcal{R}}
\newcommand\BS{\mathbb{S}}
\newcommand\BC{\mathbb{C}}
\newcommand\BP{\mathbb{P}}
\newcommand\BR{\mathbb{R}}
\newcommand\fA{\mathfrak{A}}
\newcommand\fB{\mathfrak{B}}
\newcommand\fC{\mathfrak{C}}
\newcommand\fD{\mathfrak{D}}
\newcommand\fE{\mathfrak{E}}
\newcommand\fF{\mathfrak{F}}
\newcommand\td{\tilde}
\newcommand\ot{\otimes}
\newcommand\op{\oplus}
\newcommand\ii{\sqrt{-1}}
\newcommand\ambn{\bar{\nabla}}
\newcommand\bddd{bounded in $(\infty,{\mathrm{loc}})$}
\DeclareMathOperator{\tr}{tr}
\DeclareMathOperator{\Hess}{Hess}
\DeclareMathOperator{\End}{End}
\DeclareMathOperator{\re}{Re}
\DeclareMathOperator{\vol}{Vol}
\DeclareMathOperator{\Ric}{Ric}
\DeclareMathOperator{\Sym}{Sym}
\begin{document}
\title[Stability of LMCF in K\"ahler-Einstein Manifolds with $c_1\leq0$]{Dynamical Stability of Minimal Lagrangians\\in K\"ahler-Einstein Manifolds\\of Non-Positive Curvature}
\author{Ping-Hung Lee}
\author{Chung-Jun Tsai}
\address{Department of Mathematics, National Taiwan University, and National Center for Theoretical Sciences, Math Division, Taipei 10617, Taiwan}
\email{b09201005@ntu.edu.tw, cjtsai@ntu.edu.tw}
\begin{abstract}
It is known that minimal Lagrangians in K\"ahler--Einstein manifolds of non-positive scalar curvature are linearly stable under Hamiltonian deformations.  We prove that they are also stable under the Lagrangian mean curvature flow, and therefore establish the equivalence between linear stability and dynamical stability.

Specifically, if one starts the mean curvature flow with a Lagrangian which is $C^1$-close and Hamiltonian isotopic to a minimal Lagrangian, the flow exists smoothly for all time, and converges to that minimal Lagrangian.  Due to the work of Neves \cite{Neves13}, this cannot be true for $C^0$-closeness.
\end{abstract}

\maketitle

\section{Introduction}
\subsection{Stability of Minimal Submanifold and Mean Curvature Flow}

Given a submanifold in a Riemannian manifold $F_0: \Sm \hookrightarrow (M,g)$, the mean curvature flow deforms it according to
\begin{align*}
    \frac{\pl F_t}{\pl t} &= H_{t} 
\end{align*}
where $H_t$ is the mean curvature vector of $F_t(L)$.
A natural question is the dynamical stability at a stationary state: if one starts the mean curvature flow with a submanifold close to a minimal one, will the flow exist for all time, and converge to a minimal submanifold?  As one may expect, it depends on the precise meaning of {closeness}.

Since the flow is the negative gradient flow of the volume functional, its second variation shall give some insights.  The Jacobi operator on a minimal submanifold $\Sm$ is $(\ambn^\perp)^*\ambn^\perp - (\CR + \CA)$, where $\CR$ is constructed from the curvature of $(M,g)$, and $\CA$ is a quadratic expression in the second fundamental form of $\Sm$.  It immediately implies that when $\CR+\CA\in\End(N\Sm)$ is negative definite everywhere, $\Sm$ is strictly stable in the linear sense.  Namely, $\dt^2_W\vol(\Sm) > 0$ for any non-trivial variation $W$. The negativity of $\CR+\CA$ is referred to as \emph{strong stability}.  In \cite{TsaiWang20}, the second-named author and M.-T.~Wang proved that a strongly stable minimal submanifold is dynamically stable in $C^1$.

\begin{thm}[{\cite{TsaiWang20}*{Theorem B}}] \label{thm_sstable}
    Let $\Sm^n\subset(M,g)$ be a compact, oriented, minimal submanifold which is strongly stable.  Then, if $\Gm$ is an $n$-dimensional submanifold which is close to $\Sm$ in $C^1$, the mean curvature flow $\Gm_t$ with $\Gm_0 = \Gm$ exists for all time, and $\Gm_t$ converges smoothly to $\Sm$ as $t\to\infty$.
\end{thm}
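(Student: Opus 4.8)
The plan is to represent the flowing submanifold as a normal graph over $\Sm$ and to extract from the strong stability hypothesis a scalar ``height'' function whose evolution under the flow satisfies a favorable parabolic differential inequality; long-time existence and smooth convergence then follow from standard parabolic estimates. First I would use the normal exponential map to identify a tubular neighborhood $U$ of $\Sm$ in $M$ with a disc bundle in $N\Sm$, with projection $\pi\colon U\to\Sm$ and distance function $r\colon U\to[0,\infty)$ to $\Sm$. If $\Gm$ is $C^1$-close to $\Sm$ then $\pi|_\Gm$ is a diffeomorphism onto $\Sm$, so $\Gm$ is the normal graph of a section $V_0$ of $N\Sm$ with $\|V_0\|_{C^1}$ small, and the flow equation $\pl_t F_t = H_t$ becomes a quasilinear parabolic system for $V_t$, solvable on a maximal interval $[0,T)$ along which $\Gm_t$ stays graphical at least for a short time. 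It then suffices to prove a priori estimates that (i) keep $\Gm_t\subset U$ and graphical, and (ii) bound $|A_{\Gm_t}|$ and all its covariant derivatives uniformly in $t$: the standard continuation criterion forces $T=\infty$, and the decay below forces smooth convergence to $\Sm$.

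The key step is the height estimate, and this is where strong stability enters. I would show that negativity of $\CR+\CA$ on $N\Sm$ is exactly what is needed to construct, after shrinking $U$, a function $\phi$ on $U$ comparable to $r^2$, with $\phi^{-1}(0)=\Sm$, such that $\tr_P(\Hess^M\phi)\ge 2\lambda\,\phi$ for some $\lambda>0$ and for every $n$-plane $P\subset T_pU$ whose tilt relative to the horizontal distribution of $\pi$ is below a fixed threshold. The point is that, written in Fermi coordinates, the correction of $\Hess^M(r^2)$ to its flat model $2\,\pr_{N\Sm}$ is built precisely out of the ambient curvature and the second fundamental form of $\Sm$ --- that is, out of $\CR+\CA$ --- so its negativity yields the asserted lower bound after a suitable reweighting of $r^2$. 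Since for any $f\in C^\infty(M)$ and any mean curvature flow one has $(\pl_t-\Dt_{\Gm_t})(f\circ F_t)=-\tr_{\Gm_t}(\Hess^M f)$, applying this to $\phi$ gives $(\pl_t-\Dt_{\Gm_t})\phi\le-2\lambda\,\phi$ as long as $\Gm_t$ is a graph of sufficiently small $C^1$ norm inside $U$, whence the parabolic maximum principle on the compact $\Gm_t$ yields $\sup_{\Gm_t}\phi\le e^{-2\lambda t}\sup_{\Gm_0}\phi$: in particular $\Gm_t$ never leaves $U$ and its height decays exponentially.

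Next I would keep $\Gm_t$ graphical and rule out singularities. I would control the ``tilt'' $v_t\ge1$ of $\Gm_t$, defined by $(\pi|_{\Gm_t})^{*}\dvol_\Sm=v_t^{-1}\,\dvol_{\Gm_t}$ and equal to $1$ exactly on $\Sm$: a Bochner-type computation along the flow should give a differential inequality of the schematic form $(\pl_t-\Dt_{\Gm_t})(v_t-1)\le C(v_t-1)+C\sqrt{\phi}\,(1+|A_{\Gm_t}|)$, whose error terms --- stemming from the failure of $\pi^{*}\dvol_\Sm$ to be parallel in higher codimension --- are of lower order in the height. Running this estimate jointly with the decay of $\phi$ above --- each feeds the other, so both are closed by a single continuity argument --- keeps $\sup_{\Gm_t}(v_t-1)$ uniformly small for $t<T$, so $\Gm_t$ remains a uniformly small $C^1$ normal graph inside a fixed compact region. (Alternatively, since the area of $\Gm_t$ is non-increasing and its height is small, White's local regularity theorem yields the same conclusion.) On such a graph, interior parabolic Schauder and $L^p$ estimates applied to the quasilinear system bound $|A_{\Gm_t}|$ and all higher derivatives uniformly in $t$, so $T=\infty$; the uniform bounds then give smooth subsequential convergence of $\Gm_t$, as $t\to\infty$, to a minimal submanifold, which must equal $\Sm$ by the height estimate, and interpolation with the exponential $C^0$ decay upgrades this to genuine smooth convergence $\Gm_t\to\Sm$.

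The hard part is the height step: proving that strong stability is precisely the hypothesis producing the sharp bound $\tr_P\Hess^M\phi\ge2\lambda\,\phi$ uniformly over all admissible tangent $n$-planes requires a careful second-order expansion in Fermi coordinates together with a judicious choice of the weight inside $\phi$. The secondary difficulty is the error bookkeeping in the graphicality step --- since no globally parallel calibrating form exists in higher codimension, one must check that the curvature and non-parallelism errors are genuinely dominated by the decaying height, and that the height and tilt estimates can be closed simultaneously. Once these two points are in place, the maximum-principle, Schauder, and convergence arguments are routine.
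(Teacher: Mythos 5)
This theorem is quoted from \cite{TsaiWang20} and not reproved here, so the fair comparison is with the Tsai--Wang argument (whose scheme Sections 3--6 of this paper adapt to the Lagrangian case). Your first half does match that argument: strong stability is indeed used to produce a function $\phi\sim r^2$ with $\tr_P\Hess^M\phi\geq 2\lambda\,\phi$ on nearby $n$-planes, and the identity $(\pl_t-\Dt_{\Gm_t})(f\circ F_t)=-\tr_{\Gm_t}(\Hess^M f)$ plus the maximum principle then gives exponential $C^0$ decay; the interpolation endgame is also fine. The genuine gap is the tilt-control step. The inequality you posit, $(\pl_t-\Dt_{\Gm_t})(v_t-1)\le C(v_t-1)+C\sqrt{\phi}\,(1+|A_{\Gm_t}|)$, cannot hold in this form unless $\Sm$ is totally geodesic: the error terms in the evolution of the projection Jacobian come from $\ambn(\pi^*\dvol_\Sm)$ and $\ambn^2(\pi^*\dvol_\Sm)$, and these do \emph{not} vanish along $\Sm$ --- the components of $\ambn(\pi^*\dvol_\Sm)$ with one normal argument are of the size of the second fundamental form of $\Sm$ (already visible for the nearest-point projection onto a circle in the plane), and the same is true for any other natural extension of $\dvol_\Sm$, e.g.\ by parallel transport along normal geodesics. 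So the errors are $O(1)(1+|A|)$, not $O(\sqrt{\phi})(1+|A|)$; they are not dominated by the decaying height, the tilt can a priori drift at a unit rate, and your ``single continuity argument'' does not close. Structurally, your proposal lets strong stability act only through the height function, whereas keeping the flow $C^1$-small for all time requires the negativity of the Jacobi operator to act on the \emph{derivatives} of the graph section as well; in the actual proofs this is done through a coupled hierarchy of maximum-principle inequalities for $|V|^2$, $|\nabla V|^2$, $|\nabla^2V|^2$ (the analogue in this paper is Lemmas 5.1--5.2 and Proposition 5.4, where the good $-|D^{k+1}u|^2$ term at each order absorbs the errors of the previous order), not through the projection Jacobian alone.

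Your parenthetical fallback (area non-increasing $+$ small height $\Rightarrow$ White's regularity) is essentially the Lotay--Schulze route, but as stated it is also incomplete: smallness of the height and of the total area do not by themselves give Gaussian density ratios close to $1$ at later spacetime points; one needs Huisken's monotonicity formula together with the initial $C^1$-closeness (or a weak-flow framework) to exclude density concentration, and this replaces rather than supplements the maximum-principle tilt estimate. A smaller point in the same step: for a quasilinear \emph{system} in higher codimension, interior Schauder/$L^p$ theory needs a H\"older gradient estimate first; this is available once the $C^1$ norm is uniformly small (perturbation off the heat equation, or White's theorem), but it belongs to the very step that is missing, so it cannot be dismissed as routine.
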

In \cite{LotaySchulze20}, Lotay and Schulze extended this theorem to a considerably weaker setting.

\subsection{Lagrangian Submanifold}

When the ambient manifold comes with additional structures, one can refine the discussion by considering submanifolds and variations with additional properties.
This paper concerns about the stability of minimal Lagrangian submanifolds.  Recall that a half-dimensional submanifold $L$ in a K\"ahler manifold $(M,g,J,\om)$ is called a Lagrangian submanifold if the restriction of $\om$ on $L$ vanishes.  Motivated by the study of moduli space of Calabi--Yau manifolds, minimal Lagrangian and Lagrangian mean curvature flow have received much attention.

Suppose that $L$ is a compact, oriented, minimal Lagrangian.  To incorporate the Lagrangian condition into the second variation of the volume, one has to understand the variation fields of nearby Lagrangians.  Note that a vector field $W$ normal to $L$ can be identified with the $1$-form $\xi_W = \om(W,\,\cdot\,)|_L$ on $L$.  It turns out that the infinitesimal condition of Lagrangian deformations is $\xi_W$ being $\dd$-closed.  Since harmonic $1$-forms are finite dimensional, one focuses on exact variations: $\xi_W = \dd u$ for $u\in C^\infty(L;\BR)$, which are called \emph{Hamiltonian variations}.

The second variation of the volume functional along Hamiltonian variations is derived by Oh \cites{Oh90, Oh93}.  When the ambient manifold $M$ is K\"ahler--Einstein $\Ric_g = \kp\,g$, the formula reads
\begin{align}
    \dt^2_{\dd u} \vol(L) &= \int_{L} \left( -\ip{\dd(\Dt_L u)}{\dd u} - \kp|\dd u|^2 \right) \dd V_L ~,
\label{Oh_formula} \end{align}
where $\Dt_L$ is the Laplace–Beltrami operator of the induced metric.  In particular, a minimal Lagrangian in a K\"ahler--Einstein manifold of non-positive scalar curvature is always stable under Hamiltonian variations.

On the other hand, if one starts the mean curvature flow with a Lagrangian submanifold, the Lagrangian condition needs not to be preserved along the flow.  As proved by Smoczyk \cite{Smoczyk96} and Oh \cite{Oh94}, the ambient space being K\"ahler--Einstein is a natural sufficient condition for the preservation of the Lagrangian condition.  For general symplectic manifolds, there are some attempts to modify the mean curvature flow in order to preserve the Lagrangian condition; see for example \cites{SmoczykWang11}.

\subsection{Dynamical Stability of Minimal Lagrangians}

Given a Hamiltonian variation $\dd u$, one can easily construct a nearby Lagrangian as follows.  The Weinstein theorem says that the tubular neighborhood of a Lagrangian $L$ in a symplectic manifold $(M,\om)$ is always modeled on the cotangent bundle of $L$ with the canonical symplectic form $\om_L$.  To be more precise, there exist an open neighborhood $U$ of $L$ in $T^*L$ and an embedding $\vph:U\to M$ such that $\vph^*\om = \om_L$ and the restriction of $\vph$ on the zero section is the given inclusion.  With the help of the Weinstein neighborhood, $(\vph\circ\dd u)(L)$ is a Lagrangian submanifold.  Since its position is given by $\dd u$, the induced metric is determined by the second order derivative $D^2u$.

We are now ready to state the main result of this paper.
\begin{thm}[Theorem \ref{thm_main}] \label{thm_main_intro}
    Suppose that $(M,g,J,\om)$ is a K\"ahler--Einstein manifold of non-positive scalar curvature.  Let $L\subset M$ be a compact, oriented, minimal Lagrangian, and fix a Weinstein neighborhood $\vph:U\subset T^*L\to M$.  Then, there exists $\vep > 0$ such that for any $u_0\in C^\infty(L;\BR)$ with $|u_0|+|\dd u_0| + |D^2 u_0| < \vep$, the Lagrangian mean curvature flow starting from $\vph\circ\dd u_0:L\hookrightarrow M$ exists for all time.  Moreover, as $t\to\infty$, the flow converges smoothly to $L$.
\end{thm}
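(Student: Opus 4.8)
The plan is to use that a K\"ahler--Einstein ambient manifold forces the mean curvature flow to preserve the Lagrangian condition, and to reduce the flow of Lagrangians near $L$ to a genuine parabolic system whose linearization at $L$ is \emph{not} the full Jacobi operator $(\ambn^\perp)^*\ambn^\perp-(\CR+\CA)$, but only its restriction to Hamiltonian (exact) deformations. By \eqref{Oh_formula} that restricted operator is negative definite as soon as $\kp\le 0$, even though $\CR+\CA$ need not be; thus $L$ need not be strongly stable, Theorem \ref{thm_sstable} does not apply directly, and the content of the proof is precisely to run a linearized-stability argument --- in the spirit of \cite{TsaiWang20} and \cite{LotaySchulze20} --- inside the Lagrangian category.

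First I would fix $\dt_0>0$ so small that the $\vph$-image of the graph of any closed $1$-form $\eta$ on $L$ with $\|\eta\|_{C^1}<\dt_0$ lies in $U$ and is a uniformly non-degenerate Lagrangian. Since $u_0\in C^\infty$, short-time existence of the flow $L_t$ with $L_0=\vph\circ\dd u_0$ is standard, and $L_t$ stays Lagrangian. While $L_t$ is $C^1$-$\dt_0$-close to $L$ it is, via $\vph$, the graph of a closed $1$-form $\eta_t$; its mean curvature $1$-form $\alpha_{H_t}$ is closed, because its differential is the Ricci form restricted to the Lagrangian $L_t$, which vanishes; and $[\alpha_{H_t}]$ is a multiple of the Maslov class of $L_t$, which is locally constant along the flow and equals that of $\operatorname{graph}(\dd u_0)$, which is Lagrangian-isotopic to the zero section and hence has zero Maslov class. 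So $\eta_t=\dd u_t$ stays exact, and the equation $\pl_t\eta_t=\alpha_{H_t}$, $\eta_0=\dd u_0$, reads as a quasilinear parabolic system $\pl_t\eta=\Psi[\eta]$ for exact $1$-forms on $L$, where $\Psi[\eta]:=\alpha_H\big(\vph(\operatorname{graph}\eta)\big)$ and $\Psi[0]=0$; conversely a solution of this system produces a graphical solution of the mean curvature flow, by uniqueness of the latter. The linearization $D\Psi[0]$ is the map $\dd u\mapsto\dd(\Dt_L u+\kp u)$, i.e. the Hodge Laplacian on $1$-forms plus $\kp$; by \eqref{Oh_formula} its quadratic form is $-\int_L\big((\Dt_L u)^2+\kp\,u\,\Dt_L u\big)\,\dd V_L$, which is strictly negative for every nonzero exact $\dd u$ when $\kp\le 0$. (On functions orthogonal to the constants $\Dt_L$ is already bounded away from $0$, so $\kp=0$ needs no separate treatment here --- the constants in $\ker\Dt_L$ are invisible at the level of $\eta_t=\dd u_t$.) Hence $D\Psi[0]$ is self-adjoint and elliptic on exact $1$-forms, with spectrum in $(-\infty,-c_0]$ for some $c_0>0$.

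With this the stability argument is routine. Parabolic smoothing of the smooth --- but only $C^2$-small --- initial data yields, after a fixed short time, uniform $C^\infty$ bounds controlled by $|u_0|+|\dd u_0|+|D^2u_0|$, so that $\|\eta_t\|_{H^s}$ is small for $t\ge1$ and $s$ large. Writing $\Psi[\eta]=D\Psi[0]\eta+N[\eta]$ with $N$ at least quadratic and $\|N[\eta]\|_{H^{s-1}}\lesssim\|\eta\|_{C^1}\|\eta\|_{H^{s+1}}$, the spectral gap dominates the nonlinearity while $\|\eta_t\|_{C^1}$ stays small, so a weighted energy estimate gives $\tfrac{\dd}{\dd t}\|\eta_t\|_{H^s}^2\le -c\,\|\eta_t\|_{H^s}^2$ and hence $\|\eta_t\|_{C^k}\le C_k\,e^{-ct}$ for $t\ge 1$. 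A continuity argument then closes the bootstrap: the maximal time on which $L_t$ stays $C^1$-$\dt_0$-close to $L$ is infinite, since the decay forces the closeness to improve; so the flow exists for all time and never forms a singularity, and $L_t=\vph(\operatorname{graph}\eta_t)$ converges smoothly to $L$ as $\eta_t\to0$.

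The main obstacle is conceptual rather than computational: one must realize that strong stability of $L$ is unavailable and that the correct replacement is Hamiltonian stability, which becomes effective only after one has (i) confined the flow to Lagrangians, (ii) straightened it into a parabolic system for the exact generating $1$-form --- which requires the Maslov/cohomology bookkeeping above, plus a verification that $\Psi$ is uniformly parabolic near $0$ with linearization exactly the Hamiltonian-restricted Jacobi operator of \eqref{Oh_formula} --- and (iii) checked that this reduction persists for all time. The remaining delicate points are of a standard flavor: handling the top-order terms in the energy estimates via the spectral gap, and starting those estimates only after a short time has regularized the $C^2$-small smooth initial data.
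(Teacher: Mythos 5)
Your overall strategy --- confine the flow to exact Lagrangian graphs over $L$, rewrite it as a quasilinear parabolic problem for the generating $1$-form/potential, and replace strong stability by the Hamiltonian stability of \eqref{Oh_formula}, which does have a spectral gap on exact variations when $\kp\le 0$ --- is the same point of view as the paper, which works directly with the potential $u$ and the angle function of a parallel-transported $(n,0)$-form, so that exactness is built in and no Maslov-class bookkeeping is needed. (Minor point: for $\kp\neq 0$ the class $[\af_{H_t}]$ is not topological; the correct statement, from \eqref{mean_curvature_angle}, is $[\af_{H_t}]=-\kp\,[\eta_t]$, so exactness is preserved by the resulting linear ODE in $H^1(L;\BR)$.) Your endgame also differs: the $H^s$ decay would give convergence to $L$ directly, whereas the paper proves uniform bounds, invokes Simon's theorem for full convergence, and identifies the limit via exponential decay ($\kp<0$) or Thomas--Yau uniqueness ($\kp=0$); a variant of your route is indicated in the paper's Remark 6.2, and would be fine \emph{if} the estimates were in place.

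The genuine gap is the sentence asserting that ``parabolic smoothing'' of the merely $C^2$-small (in potential) initial data yields, after a fixed short time, uniform $C^\infty$ bounds controlled by $|u_0|+|\dd u_0|+|D^2u_0|$, so that $\|\eta_t\|_{H^s}$ is small for $t\ge 1$. The initial second fundamental form ($\sim D^3u_0$) is not assumed bounded uniformly in $u_0$, the equation $\pl_t u=\ta(\dd u)+\kp u$ is fully nonlinear in $D^2u$, and there is no citable routine theorem giving (a) existence up to a definite time $t_0$ independent of higher norms of $u_0$, (b) persistence of the smallness of $|\dd u|_\sm$ and $|D^2u|_\sm$ on $[0,t_0]$, and (c) interior-in-time smoothing with constants depending only on the $C^2$-size of $u_0$. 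Your continuity argument is circular precisely here: the spectral-gap energy decay is available only after the smoothing time and only while $\|\eta_t\|_{C^1}$ stays small, but keeping the solution in that regime on the initial layer is exactly what is unproven. Supplying this control is the technical heart of the paper: the expansion $|\ta(F_u)-\Dt_\sm u|\le c(|\dd u|_\sm^2+|D^2u|_\sm^2)$ and its differentiated versions, the evolution inequalities for $u^2$, $|\dd u|_\sm^2$, $|D^2u|_\sm^2$, the maximum principle for $\psi=C_0u^2+C_1|\dd u|_\sm^2+|D^2u|_\sm^2$ (which holds for all time, not just an initial layer, and is where $\kp\le 0$ enters), and the Wang-type quantity $\log(1+|D^3u|_\sm^2)+K\psi$, which bounds the second fundamental form and lets Huisken's criterion give $T=\infty$. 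Until you provide an argument of this kind (or a White/Brakke-type local regularity input as in Lotay--Schulze), neither the long-time existence nor the starting point of your energy estimate is established.
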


The above norm is the static norm.  To say more, endow $L^n\subset M$ with the induced metric $\sm$.  The notation $D^2u$ means the Riemannian Hessian in $\sm$, and all the norms are taken by using $\sm$.  For the generalized Lagrangian mean curvature flow in the cotangent bundle of Smoczyk, Tsui and Wang \cite{SmoczykTsuiWang19}, a similar result was proved by Jin and Liu \cite{JL23}.

Here is the key ingredient of the proof.  An $(n,0)$-form is introduced in Construction \ref{const_holo_volume}.  It is not holomorphic when $\kp\neq0$, but still induces a well-behaved angle function on Lagrangians.  This angle function helps us to improve the technique of \cite{TsaiWang20} and to identify a monotone quantity along the flow when $\kp\leq0$, which is Proposition \ref{prop_test_psi}.

We make a comparison between Theorem \ref{thm_sstable} and \ref{thm_main_intro}.  As explained in \cite{TsaiWang20}*{section 3.2}, a minimal Lagrangian $L$ in a K\"ahler--Einstein manifold is strongly stable if $\Ric_{\sm} - \kp\sm$ is positive definite.  In particular, $(L,\sm)$ has positive Ricci curvature when $\kp = 0$, and thus has topological constraints.  For example, Theorem \ref{thm_main_intro} is applicable for a minimal Lagrangian subtorus in a flat torus, but Theorem \ref{thm_sstable} is not.  In a nutshell, Theorem \ref{thm_sstable} requires a stronger condition, while its conclusion works not only for Hamiltonian variations.

Note that adding a constant to $u_0$ leaves $\dd u_0$ unchanged.  By requiring $u_0$ to vanish at some point, the smallness of $|\dd u_0|$ implies the smallness of $|u_0|$.  This observation leads to the following rephrasing of Theorem \ref{thm_main_intro} in a more symplectic geometric flavor.
\begin{cor} \label{cor_hami}
    Suppose that $L$ is a compact, oriented, minimal Lagrangian in a K\"ahler--Einstein manifold of non-positive scalar curvature.  If one starts the mean curvature flow of a Lagrangian which is close to $L$ in $C^1$ and is Hamiltonian isotopic to $L$, then the flow exists for all time, and converges smoothly to $L$ as $t\to\infty$.
\end{cor}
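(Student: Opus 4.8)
The plan is to reduce Corollary \ref{cor_hami} to Theorem \ref{thm_main_intro}. First I would fix a Weinstein neighborhood $\vph:U\subset T^*L\to M$ as in Theorem \ref{thm_main_intro} and let $\vep>0$ be the constant it provides. Given a Lagrangian $L'\subset M$ that is $C^1$-close to $L$, one observes that $L'\subset\vph(U)$ and that $L'$ meets each cotangent fiber transversally, so $\vph^{-1}(L')$ is the graph of a $1$-form $\af$ on $L$. The identity $\vph^*\om=\om_L$ together with $\om|_{L'}=0$ forces $\af$ to be $\dd$-closed, and being $C^1$-close to the zero section means precisely that $|\af|$ and $|\nabla\af|$ are small, with $\nabla$ and $|\cdot|$ taken with respect to $\sm$.

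Next I would argue that the hypothesis that $L'$ is Hamiltonian isotopic to $L$ upgrades closedness of $\af$ to exactness. The point is that the de Rham class $[\af]\in H^1(L;\BR)$ is a Lagrangian flux: it equals the flux of the path $s\mapsto\vph(\mathrm{graph}(s\af))$, $s\in[0,1]$, joining $L$ to $L'$ inside $\vph(U)$. Since Hamiltonian isotopies have vanishing flux, and since for Lagrangians $C^1$-close to $L$ the flux class depends only on the endpoints (the space of such nearby Lagrangians being identified with a convex set of $1$-forms), the class $[\af]$ must vanish, so $\af=\dd u_0$ for some $u_0\in C^\infty(L;\BR)$, unique up to an additive constant. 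Normalizing $u_0$ to vanish at a chosen point of $L$, the smallness of $|\dd u_0|$ forces the smallness of $|u_0|$. By the discussion preceding the statement of Theorem \ref{thm_main_intro}, $L'=(\vph\circ\dd u_0)(L)$, with position governed by $\dd u_0$ and induced metric by $D^2u_0=\nabla\dd u_0$. Hence, taking $L'$ sufficiently $C^1$-close to $L$ ensures $|u_0|+|\dd u_0|+|D^2u_0|<\vep$, and Theorem \ref{thm_main_intro} then yields long-time existence of the Lagrangian mean curvature flow from $L'$ and smooth convergence to $L$.

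The step I expect to require the most care is the second one: justifying that ``$C^1$-close and Hamiltonian isotopic to $L$'' amounts to ``graph of a $C^1$-small exact $1$-form.'' The subtlety is that an ambient Hamiltonian isotopy realizing $L\simeq L'$ need not stay in the Weinstein neighborhood, so the vanishing of $[\af]$ should be argued through the flux homomorphism on the space of Lagrangian submanifolds — equivalently, through the fact that for Lagrangians $C^1$-close to $L$ the cohomology class of the representing $1$-form is an invariant of the Hamiltonian-isotopy class. The remaining ingredients, namely the graph representation of a nearby Lagrangian, the translation of geometric $C^1$-closeness into bounds on $\dd u_0$ and $D^2u_0$, and the normalization of $u_0$, are routine.
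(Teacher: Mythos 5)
Your overall reduction is exactly the paper's: Corollary \ref{cor_hami} is presented there as a rephrasing of Theorem \ref{thm_main_intro} (i.e.\ Theorem \ref{thm_main}), the only justification offered being that, after normalizing the additive constant so that $u_0$ vanishes at a point, smallness of $|\dd u_0|_\sm$ forces smallness of $|u_0|$. Your graph description of a $C^1$-close Lagrangian in the Weinstein neighborhood, the translation of $C^1$-closeness into bounds on $\dd u_0$ and $D^2u_0$, and the normalization step reproduce precisely this.

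The one step you add, namely deducing exactness of the closed form $\af$ from the hypothesis that $L'$ is Hamiltonian isotopic to $L$, is indeed the only nonobvious point, and the paper leaves it implicit; but your justification of it does not close the gap. The flux of a path of Lagrangians is not determined by its endpoints: two paths with the same endpoints differ by a loop of Lagrangians, and loops can have nonzero flux (e.g.\ sliding the circle $\{y=0\}$ once around $\BR^2/\BZ^2$ sweeps the total area). Your convexity remark only yields path-independence for paths that stay inside $\vph(U)$, whereas the hypothesized Hamiltonian isotopy from $L$ to $L'$ may leave the neighborhood --- exactly the subtlety you flag --- and the ``fact'' you then invoke, that $[\af]$ is an invariant of the ambient Hamiltonian-isotopy class, is precisely what has to be proved. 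That invariance is standard when the ambient symplectic manifold is exact (in $T^*L$ the Liouville class $[\tau_L|_{L'}]$ is unchanged under Hamiltonian isotopy), but a compact K\"ahler--Einstein manifold is not exact; there the comparison of the ambient Hamiltonian path with the straight-line path in $\vph(U)$ only shows that $[\af]$ lies in the group of fluxes of loops of Lagrangians, and one must combine the smallness of $[\af]$ (coming from $C^1$-closeness) with a discreteness or rigidity statement for that flux group, or with Floer-theoretic input, to conclude $[\af]=0$. So either quote such a result explicitly, or state the corollary as the paper effectively does: for Lagrangians that are exact graphs $\vph\circ\dd u_0$ over $L$, which is the class produced by Hamiltonian deformations within the Weinstein neighborhood.
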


Due to the work of Neves \cite{Neves13}, one may construct a Lagrangian $\td{L}$ Hamiltonian isotopic to $L$ which is arbitrarily close to $L$ in $C^0$, but the Lagrangian mean curvature flow starting at $\td{L}$ will develop a singularity in finite time.  Therefore, it is not possible to improve Corollary \ref{cor_hami} to $C^0$.  If one adopts a weaker notion of the mean curvature flow, it is possible to flow through the singularity for strongly stable $L$.  See the work \cite{LotaySchulze20}*{section 3.1} by Lotay and Schulze.

\begin{ack*}
    The authors are grateful to Wei-Bo Su, Valentino Tosatti, and Mao-Pei Tsui for helpful discussions.  The authors thank Mu-Tao Wang for suggestions on the earlier draft of this paper.  This research is supported in part by the Taiwan NSTC grants 112-2636-M-002-003 and 112-2628-M-002-004-MY4.
\end{ack*}

\section{Preliminaries}

\subsection{K\"ahler--Einstein metric}
Let $(M,g,J,\om)$ be a K\"ahler manifold of complex dimension $n$.  It is known (see for instance \cite{Kodaira06}*{ch.~3}) that the Levi-Civita connection naturally induces a connection on the canonical line bundle $K_M$, whose curvature form $\ii\rho$ is equivalent to the Ricci curvature of $g$.  Specifically,
\begin{align}
    \rho(X,Y) &= \Ric_g(JX,Y)
\label{Ricci_form} \end{align}
for any tangent vectors $X,Y$.  The metric is K\"ahler--Einstein if and only if
\begin{align}
    \rho &= \kp\,\om
\label{KE} \end{align}
for some constant $\kp\in\BR$.

\subsection{Lagrangian submanifold}
A half-dimensional submanifold $L\subset M$ is called a \emph{Lagrangian} submanifold if $\om|_L \equiv 0$.

Denote by $\tau_L$ the \emph{tautological $1$-fom} on $T^*L$.  A local coordinate $\{x^i\}_{i=1}^n$ for $L$ induces a coordinate $\{y_i\}_{i=1}^n$ for the fibers of $T^*L$ over the coordinate chart, and $\tau_L = \sum_{i=1}^n y_i\dd x^i$.  Its negative exterior derivative, $\om_L = -\dd\tau_L$, is a canonically defined symplectic form on $T^*L$.  

The Weinstein neighborhood theorem (\cite{McDuffSalamon17}*{ch.~3}) asserts that the tubular neighborhood of a Lagrangian submanifold is symplectomorphic to its cotangent bundle.  Such a parametrized neighborhood is not unique, and any choice suffices for the purpose of this paper.

\begin{thm} \label{thm_Weinstein_nbd}
    Let $L \subset (M,\om)$ be a Lagrangian submanifold.  Then, there exists an open neighborhood $U$ of the zero section in $T^*L$, and an embedding $\vph:U\hookrightarrow M$ such that the restriction of $\vph$ on the zero section is the given inclusion $L\subset M$, and $\vph^*\om = \om_L$.
\end{thm}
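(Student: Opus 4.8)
The plan is to prove this by the standard combination of the tubular neighborhood theorem with Moser's method; compare \cite{McDuffSalamon17}*{ch.~3}. First I would record a linear-algebraic normalization. Since $L$ is Lagrangian, at every $p\in L$ the subspace $T_pL\subset T_pM$ equals its $\om$-orthogonal complement, so $\om$ descends to a nondegenerate pairing between the normal space $T_pM/T_pL$ and $T_pL$, identifying the normal bundle $NL$ with $T^*L$. Moreover one can choose a smooth subbundle $V\subset TM|_L$ that is an $\om$-Lagrangian complement to $TL$ at each point, since the bundle whose fibre at $p$ is the set of Lagrangian complements of $T_pL$ is an affine bundle, hence has contractible fibres and admits a section. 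Feeding the resulting identification $V\cong T^*L$ into the tubular neighborhood theorem produces a diffeomorphism $\psi$ from an open neighborhood $U_0$ of the zero section of $T^*L$ onto a neighborhood of $L$ in $M$, such that $\psi|_L$ is the given inclusion and, along $L$, the differential $\dd\psi$ carries the vertical subspace $T_p^*L$ isomorphically onto $V_p$ (via the chosen identification) and the horizontal $T_pL$ onto $T_pL\subset T_pM$.

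Next I would check that the closed $2$-forms $\om_0:=\om_L$ and $\om_1:=\psi^*\om$ agree at every point of the zero section, viewed as bilinear forms on $T_p(T^*L)=T_pL\oplus T_p^*L$. On $T_pL\times T_pL$ both vanish, because $L$ is $\om$-Lagrangian and the zero section is $\om_L$-Lagrangian; on $T_p^*L\times T_p^*L$ both vanish, because $V_p$ is $\om$-Lagrangian and the cotangent fibre is $\om_L$-Lagrangian; and on the mixed part both recover the canonical pairing of $T_pL$ with $T_p^*L$, by the construction of $V\cong T^*L$ from $\om$ together with the formula $\om_L=-\dd\tau_L$ (one fixes the sign in the identification so that these match). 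Since $\om_0$ and $\om_1$ coincide along $L$ and nondegeneracy is an open condition, $\om_t:=(1-t)\om_0+t\om_1$ is symplectic on a possibly smaller neighborhood of $L$ for every $t\in[0,1]$.

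Then comes the Moser step. The $2$-form $\om_1-\om_0$ is closed and vanishes at every point of $L$, so the relative Poincar\'e lemma yields a $1$-form $\bt$ on a neighborhood of $L$ with $\dd\bt=\om_1-\om_0$ and $\bt_p=0$ for every $p\in L$. Define the time-dependent vector field $X_t$ by $\iota_{X_t}\om_t=-\bt$; because $\bt$ vanishes on $L$, so does $X_t$, hence its flow $\chi_t$ fixes $L$ pointwise and, after shrinking the neighborhood, exists for $t\in[0,1]$. The standard computation $\frac{\dd}{\dd t}\big(\chi_t^*\om_t\big)=\chi_t^*\big(\dd\iota_{X_t}\om_t+\om_1-\om_0\big)=0$ gives $\chi_1^*\om_1=\om_0$, so that setting $\vph:=\psi\circ\chi_1$ on a sufficiently small neighborhood $U$ of the zero section yields $\vph^*\om=\chi_1^*\psi^*\om=\chi_1^*\om_1=\om_L$, while $\vph|_L$ is the given inclusion.

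I expect the second step to be the main obstacle: arranging that the two symplectic forms agree to zeroth order along $L$ is exactly what forces the careful choice of the Lagrangian complement $V$ and of a tubular neighborhood compatible with it in the first step, and without this normalization the Moser interpolation would not begin from matching forms. The remaining ingredients — the affine (hence contractible) structure of the bundle of Lagrangian complements, the relative Poincar\'e lemma with vanishing along $L$, and the existence of the Moser flow on a neighborhood of $L$ for $t\in[0,1]$ — are routine and only require shrinking neighborhoods, and are immediate when $L$ is compact, which is the case relevant to this paper.
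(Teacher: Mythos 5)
Your proposal is correct and is essentially the standard argument: the paper does not prove Theorem \ref{thm_Weinstein_nbd} itself but cites \cite{McDuffSalamon17}*{ch.~3}, where the proof is exactly this combination of a Lagrangian complement of $TL$ (identifying $NL\cong T^*L$), a compatible tubular neighborhood, and Moser's trick with the relative Poincar\'e lemma. The only point worth noting is that your shrinking/flow-existence remarks are immediate in the compact case, which is the situation actually used in this paper.
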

With the Weinstein neighborhood, a $1$-form $\xi$ on $L$ gives an embedding of $L$ in $T^*L$.  It is straightforward to verify that $\xi^*\tau_L = \xi$.  Thus, $\xi(L)$ is Lagrangian in $(T^*L,\om_L)$ if and only if $\dd\xi = 0$.  In particular, for any function $u$ on $L$, $\dd u(L)$ is always Lagrangian.

\subsection{Extrinsic geometry of Lagrangians}

Let $L \subset (M,g,J,\om)$ be a Lagrangian submanifold in a K\"ahler manifold.  One has the natural bundle isomorphisms:
\begin{align}
    NL\text{: normal bundle of $L$}  \stackrel{J}{\longrightarrow} TL \stackrel{\flat}{\longrightarrow} T^*L ~,
\label{bundle_iso} \end{align}
and they are parallel with respect to the connections induced by the Levi-Civita connection.  A direct computation shows that the composition of the maps in \eqref{bundle_iso} is $W\in NL \mapsto \om(W,\,\cdot\,)|_L$.
\begin{defn}
    Let $H$ be the {mean curvature vector} of a Lagrangian $L\subset M$ in a K\"ahler manifold.  Denote $\om(H,\,\cdot\,)|_L$ by $\af_L$, which is called the \emph{mean curvature $1$-form} of $L$.
\end{defn}

The mean curvature $1$-form is related to the complex structure of $M$ by the following lemma, which is proved in \cite{HarveyLawson82}*{sec.~III.2.D} and \cite{Oh94}*{Prop.~2.2}.
\begin{lem} \label{lem_mean_curvature}
     Suppose that $L$ is an oriented Lagrangian in a K\"ahler manifold $M$.  Consider the line bundle ${K_M}|_L$ with the connection induced by the Levi-Civita connection, which is denoted by $\nabla$.  Then, ${K_M}|_L$ admits a unique section $\Om_L$ whose restriction on the tangent space of $L$ coincides with the volume form $\dd V_L$ of $L$.  The section $\Om_L$ is of constant length, and
     \begin{align}
         \nabla\Om_L &= \ii\af_L\ot\Om_L
     \label{Lag_cpx_volumeform} \end{align}
\end{lem}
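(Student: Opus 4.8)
The plan is to reduce the statement to pointwise linear algebra together with a computation in an orthonormal frame along $L$. I would first settle existence and uniqueness of $\Om_L$ by linear algebra: at each $p\in L$ the subspace $T_pL\subset T_pM$ is Lagrangian, hence totally real, since $\om(v,Jv)=|Jv|^2$ vanishes on $L$ only for $v=0$, so $T_pL\cap J(T_pL)=0$ and $T_pM=T_pL\ot_{\BR}\BC$ as complex vector spaces. Therefore the restriction map $K_M|_p=\Ld^{n,0}T_p^*M\to\Ld^n T_p^*L\ot\BC$ is a $\BC$-linear isomorphism of one-dimensional complex vector spaces. Using the orientation of $L$ to single out $\dd V_L$, I let $\Om_L|_p$ be its unique preimage; this is a smooth, nowhere-vanishing section (its restriction to $T_pL$ is the non-vanishing form $\dd V_L$), and it is the unique section with the stated restriction property.

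Since $\Om_L$ trivializes $K_M|_L$, there is a unique $\BC$-valued $1$-form $\bt$ on $L$ with $\nabla_X\Om_L=\bt(X)\,\Om_L$, and \eqref{Lag_cpx_volumeform} is exactly the claim $\bt=\ii\,\af_L$. To compute $\bt$, fix $p$, pick a local orthonormal frame $\{e_j\}_{j=1}^n$ of $TL$, and differentiate the identity $\Om_L(e_1,\dots,e_n)\equiv 1$ along $X\in T_pL$. Decomposing $\nabla_Xe_j=\nabla^L_Xe_j+\mathrm{II}(X,e_j)$ into its tangential and normal parts (where $\mathrm{II}$ is the second fundamental form), the tangential terms add up to the trace of a skew-symmetric connection matrix and vanish, leaving
\begin{align*}
    0=\bt(X)+\sum_{j=1}^n\Om_L\bigl(e_1,\dots,\mathrm{II}(X,e_j),\dots,e_n\bigr).
\end{align*}
For a normal vector $W$ one has $W=-J(JW)$ with $JW\in TL$, and being of type $(n,0)$, $\Om_L$ satisfies $\Om_L(\dots,JV,\dots)=\ii\,\Om_L(\dots,V,\dots)$; hence the $j$-th summand above equals $-\ii\,g(J\,\mathrm{II}(X,e_j),e_j)$, so the sum is $-\ii\sum_j g(J\,\mathrm{II}(X,e_j),e_j)$.

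It remains to identify that sum with $\af_L(X)$. Here I would use the Kähler-geometric fact that on a Lagrangian the cubic form $(U,V,Z)\mapsto g(\mathrm{II}(U,V),JZ)$ is totally symmetric — which follows from $\nabla J=0$ and $g(V,JZ)=-\om(V,Z)=0$ on $L$. Combined with the skew-adjointness of $J$, re-summing gives $\sum_j g(J\,\mathrm{II}(X,e_j),e_j)=-g\bigl(\sum_j\mathrm{II}(e_j,e_j),JX\bigr)=-g(H,JX)=g(JH,X)=\af_L(X)$, where $H$ is the mean curvature vector; hence $\bt=\ii\,\af_L$, which is \eqref{Lag_cpx_volumeform}. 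The constancy of $|\Om_L|$ is then immediate: the Hermitian metric on $K_M$ induced by $g$ is parallel for $\nabla$, so $X|\Om_L|^2=2\re\ip{\nabla_X\Om_L}{\Om_L}=2\re\bigl(\ii\,\af_L(X)\bigr)|\Om_L|^2=0$ since $\af_L$ is real.

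The proof has no single hard estimate; the point to get right is the bookkeeping of types and signs — in particular the $(n,0)$-type rule $\Om_L(\dots,JV,\dots)=\ii\,\Om_L(\dots,V,\dots)$, and, above all, the total symmetry of the cubic second fundamental form, which is precisely what converts the "off-diagonal" insertions of $\mathrm{II}$ into the trace that defines the mean curvature vector $H$.
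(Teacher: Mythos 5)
Your proof is correct, and it is essentially the standard argument: the paper itself does not prove this lemma but defers to \cite{HarveyLawson82}*{sec.~III.2.D} and \cite{Oh94}*{Prop.~2.2}, where the same scheme appears --- total reality of a Lagrangian plane makes restriction $K_M|_p\to\Ld^nT_p^*L\ot\BC$ an isomorphism (giving existence, uniqueness, and constant length), and the connection $1$-form is computed in an orthonormal frame using the $(n,0)$-type rule together with the total symmetry of $g(\mathrm{II}(\cdot,\cdot),J\cdot)$ on a Lagrangian. The only caveat is that the sign in \eqref{Lag_cpx_volumeform} depends on the conventions for $\om$ versus $J$ and for $J$ acting on $1$-forms (cf.\ the description of $\Om_L$ as $(\sm^1+\ii J(\sm^1))\w\cdots\w(\sm^n+\ii J(\sm^n))$ after the lemma); your computation is internally consistent with $\om(\cdot,\cdot)=g(J\cdot,\cdot)$.
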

Suppose that $\{\sm^i\}_{i=1}^n$ is a local, oriented, orthonormal frame for $T^*L$.  The section $\Om_L$ is given by $(\sm^1+\ii J(\sm^1))\w\cdots\w(\sm^n+\ii J(\sm^n))$.

\subsection{Mean curvature 1-form of nearby Hamiltonian equivalent Lagrangians}

\begin{const} \label{const_holo_volume}
    Suppose that $L \subset (M,g,J,\om)$ is an oriented, \emph{minimal Lagrangian} submanifold in a \emph{K\"ahler--Einstein} manifold, $\Ric_g = \kp g$.  Fix a Weinstein neighborhood given by Theorem \ref{thm_Weinstein_nbd}, $\vph: U\to M$.
    
    For any $z\in \vph(U)$, $\gm_z(t) = \vph(t\cdot\vph^{-1}(z))$ for $t\in[0,1]$ is a curve connecting $z$ to $L$, which corresponds to a line segment in $T^*L$.  Take the section $\Om_L$ of ${K_M}|_L$ produced by Lemma \ref{lem_mean_curvature}.  Parallel transport $\Om_L$ along $\gm_z$ for any $z\in\vph(U)$ gives a smooth section of ${K_M}$ over ${\vph(U)}$, which will be denoted by $\Om_\vph$.  Note that $\Om_\vph$ is of constant length, and gives the exponential gauge for $K_M$ along the fibers of $T^*L$ (under $\vph$).

    Due to the minimality of $L$, \eqref{Lag_cpx_volumeform} says that $\Om_L$ is parallel along $L$.  Since $M$ is K\"ahler--Einstein, the curvature of $K_M$ is $\ii\kp\,\om$ \eqref{KE}.  By the calculation for exponential gauge in \cite{Uhlenbeck82}*{sec.~2}, one finds that on $\vph(U)$,
    \begin{align}
        \ambn\Om_\vph &= -\ii\kp\,\tau_L\ot\Om_\vph ~,
    \label{Weinstein_cpx_volumeform} \end{align}
    where $\ambn$ is the connection of $K_M$.  We omit $(\vph^{-1})^*$ on $\tau_L$ for simplicity.
\end{const}

\begin{rmk}
    Suppose that $M$ is a Calabi--Yau manifold with a holomorphic volume form $\Om$, and $L$ is calibrated by $\re\Om$.  The output $\Om_\vph$ of the above construction coincides with $\Om$.
\end{rmk}

For an oriented Lagrangian submanifold $\Gm\subset\vph(U)$, it follows from \cite{HarveyLawson82}*{sec.~III.1} that the restriction of $\Om_\vph$ on the tangent space of $\Gm$ is $e^{\ii\ta}\dd V_\Gm$ for some function $e^{\ii\ta}:\Gm\to \BS^1$.  It follows from Lemma \ref{lem_mean_curvature} that $\Om_\vph = e^{\ii\ta}\Om_\Gm$ on $\Gm$.  By \eqref{Lag_cpx_volumeform}, we find that on $\Gm$,
\begin{align*}
    \nabla\Om_\vph &= \nabla(e^{\ii\ta}\Om_\Gm) = \ii(\dd\ta + \af_\Gm)\ot\Om_\vph ~.
\end{align*}
Comparing it with the restriction of \eqref{Weinstein_cpx_volumeform} on $\Gm$ gives
\begin{align}
    \af_\Gm &= - \dd\ta - \kp\,{\tau_L}|_\Gm ~.
\label{mean_curvature_angle} \end{align}
For a function $u:L\to\BR$, applying \eqref{mean_curvature_angle} to $\Gm = \vph\circ\dd u$ gives the following lemma.

\begin{lem} \label{lem_mean_curvature_potential}
    Under the setting of Construction \ref{const_holo_volume}, suppose the $u:L\to\BR$ is a smooth function so that $\dd u(L)\subset U$.  Then, the mean curvature $1$-form of $F_u = \vph\circ\dd u$ is given by the exterior derivative of $-\ta(F_u) - \kp\,u$.  Here, $\ta(F_u)$ is the argument of $\Om_\vph/\dd V_{(\vph\circ\dd u)(L)}$, which is a well-defined smooth function.
\end{lem}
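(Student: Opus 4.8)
The plan is to specialize the identity \eqref{mean_curvature_angle}, derived there for an arbitrary oriented Lagrangian $\Gm\subset\vph(U)$, to $\Gm = (\vph\circ\dd u)(L)$, and then transport everything from $\Gm$ back to $L$ along the embedding $F_u = \vph\circ\dd u$. (Orient $\Gm$ by $F_u$; this is the orientation compatible with that of $L$, since $F_u$ is isotopic to the inclusion through the embeddings $F_{su}$, $s\in[0,1]$, so all the sign conventions entering \eqref{mean_curvature_angle} match those on $L$.) Two pieces of naturality do the work. First, $F_u$ is an isometry of $L$, equipped with its induced metric, onto $\Gm$; hence the mean curvature $1$-form $\af_{F_u}$ of the immersion $F_u$ equals $F_u^*\af_\Gm$, and likewise the unit-modulus function $e^{\ii\ta}$ on $\Gm$ pulls back to $\Om_\vph/\dd V_{(\vph\circ\dd u)(L)}$ viewed as a function on $L$, whose argument is by definition $\ta(F_u)$. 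Second, since ${\tau_L}|_\Gm$ in \eqref{mean_curvature_angle} denotes, in the notational convention of Construction \ref{const_holo_volume}, the restriction of $(\vph^{-1})^*\tau_L$ to $\Gm$, we have $F_u^*\bigl({\tau_L}|_\Gm\bigr) = (\dd u)^*\vph^*(\vph^{-1})^*\tau_L = (\dd u)^*\tau_L$, and the elementary identity $\xi^*\tau_L=\xi$ recorded after Theorem \ref{thm_Weinstein_nbd} gives $(\dd u)^*\tau_L = \dd u$.

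Granting these, pulling back \eqref{mean_curvature_angle} along $F_u$ yields
\[
\af_{F_u} \;=\; F_u^*\af_\Gm \;=\; -\,\dd\bigl(\ta(F_u)\bigr)\;-\;\kp\,\dd u \;=\; \dd\bigl(-\ta(F_u)-\kp\,u\bigr),
\]
which is the asserted formula.

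The only point requiring more than bookkeeping is the claim that $\ta(F_u)$ is an honest $\BR$-valued smooth function rather than one defined modulo $2\pi$; equivalently, that $\Om_\vph/\dd V_{(\vph\circ\dd u)(L)} : L\to\BS^1$ admits a smooth real logarithm. For this I would use that the Weinstein neighborhood $U$ of Construction \ref{const_holo_volume} is fiberwise star-shaped about the zero section — this is precisely what makes the curves $\gm_z$ there, hence $\Om_\vph$, well-defined — so $\dd(su)(L)\subset U$ for every $s\in[0,1]$. Then $(s,p)\mapsto F_{su}(p)$ is a homotopy from $\Om_\vph/\dd V_{(\vph\circ\dd u)(L)}$ to the constant map $p\mapsto \Om_\vph/\dd V_L\equiv 1$ on $L$, the last equality because $\Om_\vph|_L=\Om_L$ restricts to $\dd V_L$ on $TL$ by Lemma \ref{lem_mean_curvature}. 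A null-homotopic map into $\BS^1$ lifts to $\BR$, uniquely once one normalizes (say $\ta(F_0)\equiv 0$), and the lift is automatically smooth since the map is smooth and nowhere zero. For the use made of this lemma in the rest of the paper it is in fact enough to note that when $|\dd u|$ and $|D^2 u|$ are small, $\Om_\vph/\dd V_{(\vph\circ\dd u)(L)}$ is $C^0$-close to $1$, so its argument may be chosen in $(-\pi/2,\pi/2)$ outright.

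I do not expect a genuine obstacle: the mathematical content sits entirely in \eqref{mean_curvature_angle}, and what remains is pullback bookkeeping together with the identity $\xi^*\tau_L=\xi$. If any step is delicate, it is only the passage from the angle defined modulo $2\pi$ to a well-defined real-valued function described above.
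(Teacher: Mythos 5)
Your proposal is correct and follows the paper's own route: the paper obtains the lemma precisely by applying \eqref{mean_curvature_angle} to $\Gm = \vph\circ\dd u$, with the pullback identity $(\dd u)^*\tau_L = \dd u$ supplying the term $\kp\,\dd u$. Your extra discussion of lifting the argument of $\Om_\vph/\dd V_{(\vph\circ\dd u)(L)}$ to a genuine $\BR$-valued function via the homotopy $F_{su}$ is a sound justification of a point the paper leaves implicit (and uses later via $\ta|_{s=0}=0$ in Proposition \ref{prop_angle_asmp}).
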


\subsection{Mean curvature flow in potential} \label{sec_LMCF_potential}

For a smooth function $u:L\times[0,t_0)\to\BR$, denote the spatial exterior derivative by $\dd u$.  Suppose that $F_u = \vph\circ\dd u$ evolves by the mean curvature flow; the non-parametric form of the equation reads
\begin{align}
    \left( \frac{\pl}{\pl t}F_u\right)^\perp &= H(t)
\label{MCF} \end{align}
where the right hand side means the mean curvature vector of $F_u(L)$ at time $t$.  According to the isomorphisms \eqref{bundle_iso}, Lemma \ref{lem_mean_curvature_potential} and the fact that $F_u(L)$ is Lagrangian, \eqref{MCF} becomes
\begin{align*}
    (\vph\circ\dd u)^*\left( \om\left( \frac{\pl}{\pl t}(\vph\circ\dd u) , \,\cdot\, \right) \right) &= -\dd(\ta(F_u) + \kp\,u) ~.
\end{align*}
Since $\vph^*\om = \om_L$, the left hand side can be computed explicitly in $T^*L$, and is equal to $-\dd(\frac{\pl}{\pl t} u)$.  Therefore, \eqref{MCF} in the current setting is equivalent to
\begin{align}
    \frac{\pl u}{\pl t} &= \ta(\dd u) + \kp\, u + C(t)
\label{LMCF_potential} \end{align}
for some $C(t):[0,t_0)\to\BR$, aka a time-dependent constant.  The function $C(t)$ has no effect on $\vph\circ\dd u$.  In this paper, $C(t)$ will be simply set to be $0$.

\subsection{Basic Riemannian geometry of the cotangent bundle} \label{sec_geom_cot}

Let $\sm = \sum_{i,j}\sm_{ij}(x)\dd x^i\ot\dd x^j$ be a Riemannian metric on $L$.  Denote by $D$ the Levi-Civita connection of $\sm$, and let $\Ld_{ij}^k$ be the Christoffel symbols of $D$.  The coordinate $\{x^i\}$ induces a coordinate $\{y_i\}$ for the fibers of $T^*L$.  The metric $\sm$ naturally gives a Riemannian metric on $T^*L$:
\begin{align}
    \td{\sm} &= \sum_{i,j}\sm_{ij}\dd x^i\ot\dd x^j + \sm^{ij}\eta_i\ot\eta_j
\label{linear_metric} \end{align}
where $\eta_i = \dd y_i - \sum_{j,k}\Ld_{ij}^k\,y_k\,\dd x^j$ and $\sm^{ij}$ is the inverse of $\sm_{ij}$.  The dual frame for $\{\dd x^i, \eta_i\}_{1\leq i\leq n}$ is $\{X_i,\frac{\pl}{\pl y_i}\}_{1\leq i\leq n}$ where
\begin{align}
    X_i &= \frac{\pl}{\pl x^i} + \sum_{j,k}\Ld^k_{ij}\,y_k\,\frac{\pl}{\pl y_j} ~.
\end{align}

For a smooth function $u:L\to\BR$, denote $(D^ku)(\frac{\pl\;}{\pl x^{i_k}},\cdots,\frac{\pl\;}{\pl x^{i_1}})$  by $u_{;i_1i_2\cdots i_k}$.  In particular,
\begin{align*}
    u_{;ij} = \frac{\pl^2 u}{\pl x^j\pl x^i} - \sum_k\Ld^k_{ji}\frac{\pl u}{\pl x^k} ~,~
    \text{and }~ \Delta_\sm u = \sum_{i,j}\sm^{ij}u_{;ij}
\end{align*}
is the Laplacian of $u$ with respect to the metric $\sm$.

For a function $u:L\to\BR$, let $F_u: L\to T^*L$ be the embedding given by $\dd u$.  In a local coordinate system, $F_u$ sends $(x^1,\ldots,x^n)$ to $(x^1,\ldots,x^n,\frac{\pl u}{\pl x^1},\ldots,\frac{\pl u}{\pl x^n})$.  Hence,
\begin{align}
    \frac{\pl F_u}{\pl x^i} &= \frac{\pl}{\pl x^i} + \sum_{j}\frac{\pl^2 u}{\pl x^j\pl x^i}\frac{\pl}{\pl y_j} = X_i + \sum_j u_{;ij}\frac{\pl}{\pl y_j} ~.
\label{1st_der_F} \end{align}

\section{Asymptotics of the Ambient Geometry}

For a Lagrangian $L$ in a K\"ahler manifold $(M,g,J,\om)$, fix a Weinstein neighborhood $\vph:U\subset T^*L \to M$.
Denote by $\ip{~}{~}$ the metric pairing on $U$ given by $\vph^*g$, and by $\ambn$ its Levi-Civita connection, which is equivalent to the Levi-Civita connection of $(M,g)$.

Let $\{g_{AB}(x,y)\}_{1\leq A,B\leq2n}$ be the coefficients of $\vph^*g$ in the coframe $\{\dd x^i, \eta_i\}_{1\leq i\leq n}$ introduced in section \ref{sec_geom_cot}; namely,
\begin{align} \label{ambient_metric}
    \vph^*g = \sum_{i,j} g_{ij}\,\dd x^i\ot\dd x^j + g_{i(n+j)}(\dd x^i\ot\eta_j+\eta_j\ot\dd x^i) + g_{(n+i)(n+j)}\,\eta_i\ot\eta_j ~,
\end{align}
and set $g_{AB} = g_{BA}$.  Equivalently,
\begin{align*}
    g_{ij} &= \ip{X_i}{X_j} ~,  & g_{i(n+j)} &= \ip{X_i}{\frac{\pl}{\pl y_j}} ~,  & g_{(n+i)(n+j)} &= \ip{\frac{\pl}{\pl y_i}}{\frac{\pl}{\pl y_j}} ~.
\end{align*}
Since $\vph^*\om = \om_L$, we abuse the notation and denote $\om_L$ by $\om$ from now on.  Similarly, denote $\vph^*J$ by $J$.  It follows from $\om = \sum_i\dd x^i\w\dd y_i$ and $\Ld_{ij}^k = \Ld_{ji}^k$ that
\begin{align*}
    \om({X_i},{X_j}) &= 0 ~,  & \om({X_i},{\frac{\pl}{\pl y_j}}) &= \dt_{ij} ~,  & \om({\frac{\pl}{\pl y_i}},{\frac{\pl}{\pl y_j}}) &= 0 ~.
\end{align*}

\begin{assumption} \label{assume_basic}
    Suppose that $L\subset(M,g,J,\om)$ is a compact, oriented, minimal Lagrangian in a K\"ahler--Einstein manifold.  Fix a Weinstein neighborhood $\vph:U\subset T^*L \to M$.  Let $\sm$ be the induced metric on $L$.  It induces an inner product on the fibers of $T^*L$: $|y|^2_\sm = |y_i\dd x^i|_\sm^2 = \sum_{i,j}\sm^{ij}(x)y_iy_j$.  Since $L$ is compact, there exists a positive constant $\vep_0 \leq 1$ such that $\{ y\in T^*L : |y|_\sm \leq \vep_0\}$ is contained in $U$.

    We also fix a finite coordinate covering of $L$ with the following significance: there exist constants $c_0, c_1, c_2, \ldots$ such that for any $x$ in each coordinate chart, and any $y$ with $|y|_\sm \leq \vep_0$,
    \begin{itemize}
        \item $\frac{1}{c_0} g_{AB}(x,0) \leq g_{AB}(x,y) \leq c_0\, g_{AB}(x,0)$ as $(2n)\times(2n)$ positive definite matrices;
        \item $\frac{1}{c_0} \sm_{ij}(x) \leq g_{ij}(x,y) \leq c_0\, \sm_{ij}(x)$ as $n\times n$ positive definite matrices;
        \item $|\pl_x^{(k)}\pl_y^{(\ell)} g_{AB}(x,y)| \leq c_{k+\ell}$ for every $k,\ell\geq0$.
    \end{itemize}
\end{assumption}

\begin{defn}
    Denote the above finite coordinate covering by $\{O_\bt\}$.  Any $x\in O_\bt$ and $Q\in\Sym_n(\BR)$ naturally define an element $\sum_{ij}Q_{ij}\,\dd x^i\ot\dd x^j$ in $\Sym^2(T_x^*L)$, whose norm is given by $|Q|_\sm^2 = \sum_{i,j,k,\ell}\sm^{ik}(x)\sm^{j\ell}(x)Q_{ij}Q_{k\ell}$.  Let
    \begin{align}
        \hat{O}_\bt &= \{(x,y,Q)\in O_\bt\times\BR^n\times\Sym_n(\BR) : |y|_\sm \leq \vep_0 , |Q|_\sm \leq \vep_0 \} ~.
    \end{align}
    A collection of smooth functions $f_\bt(x,y,Q)$ defined on an open neighborhood of $\hat{O}_\bt$ in $O_\bt\times\BR^n\times\Sym_n(\BR)$ is said to be \emph{\bddd} if
    \begin{align*}
        \left| \pl_x^{(i)}\pl_y^{(j)}\pl_Q^{(k)} f_\bt (x,y,Q) \right| \leq c_{i+j+k}
    \end{align*}
    for any $\bt$, any $i,j,k\geq0$ and any $(x,y,Q)\in\hat{O}_\bt$.
\end{defn}

\begin{eg} \label{bddd_example}
    It shall be clear that the following functions are defined on each coordinate chart, and the index $\bt$ will be omitted.
    \begin{enumerate}
        \item Due to Assumption \ref{assume_basic}, the Christoffel symbols $\Ld_{ij}^k(x)$ of $\sm_{ij}(x) = g_{ij}(x,0)$ are \bddd.  They only depend on $x$.
        \item The metric coefficients $g_{AB}(x,y)$ are \bddd, so are their Christoffel symbols.  They do not depend on $Q$.
        
        \noindent In particular, the functions $\ip{X_i}{\ambn_{X_j}X_k}$ and $\ip{X_i}{\ambn_{\frac{\pl}{\pl y^j}}X_k}$ are \bddd.
        \item Let
        \begin{align} \begin{split}
             \mu_{ij}(x,y,Q) &= g_{ij}(x,y) + \sum_{k}(g_{i(n+k)}(x,y)Q_{jk} + g_{j(n+k)}(x,y)Q_{ik}) \\
             &\qquad + \sum_{k,\ell}g_{(n+k)(n+\ell)}(x,y)Q_{ik}Q_{j\ell} ~.
        \end{split} \label{muij} \end{align}
        They are \bddd.  They are quadratic in the components of $Q$.
    \end{enumerate}
\end{eg}

The functions $\mu_{ij}$ defined by \eqref{muij} will be used quite often in the rest of this paper.  Their main properties are summarized in the following lemma.

\begin{lem} \label{lem_muij}
    Consider $\mu_{ij}(x,y,Q)$ defined by \eqref{muij}.  By taking $\vep_0$ smaller if necessary, $\mu_{ij}$ as a symmetric $n\times n$ matrix obeys
    \begin{align*}
        \frac{1}{2} \sm_{ij}(x) \leq \mu_{ij}(x,y,Q) \leq 2\,\sm_{ij}(x)
    \end{align*}
    for any $(x,y,Q)$ with $|y|_\sm\leq\vep_0$ and $|Q|_\sm\leq\vep_0$.  Moreover,
    \begin{align}
        \mu_{ij}(x,y,Q) - \sm_{ij}(x) &= \sum_{k} y_k\,\fA_{ij}^k(x,y) + \sum_{k,\ell} Q_{k\ell}\,\fB_{ij}^{k\ell}(x,y,Q)
    \end{align}
    for some $\fA_{ij}^k(x,y), \fB_{ij}^{k\ell}(x,y,Q)$ \bddd.
\end{lem}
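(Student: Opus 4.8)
The plan is to prove Lemma \ref{lem_muij} by a direct examination of the definition \eqref{muij}, treating the estimate and the difference formula as two separate tasks, both of which reduce to the uniform bounds supplied by Assumption \ref{assume_basic} and Example \ref{bddd_example}.

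\textbf{Step 1: the difference formula.}  First I would compute $\mu_{ij}(x,y,Q)-\sm_{ij}(x)$ directly.  Since $\sm_{ij}(x)=g_{ij}(x,0)$, write
\begin{align*}
    \mu_{ij}(x,y,Q)-\sm_{ij}(x) &= \bigl(g_{ij}(x,y)-g_{ij}(x,0)\bigr) \\
    &\qquad + \sum_{k}\bigl(g_{i(n+k)}(x,y)Q_{jk}+g_{j(n+k)}(x,y)Q_{ik}\bigr) \\
    &\qquad + \sum_{k,\ell}g_{(n+k)(n+\ell)}(x,y)Q_{ik}Q_{j\ell}~.
\end{align*}
The first bracket is handled by the fundamental theorem of calculus in the fiber variable: $g_{ij}(x,y)-g_{ij}(x,0)=\sum_k y_k\int_0^1(\pl_{y_k}g_{ij})(x,ty)\,\dd t$, so $\fA^k_{ij}(x,y)$ picks up a contribution $\int_0^1(\pl_{y_k}g_{ij})(x,ty)\,\dd t$, which is \bddd\ because $g_{ij}$ and all its derivatives are, and because $|ty|_\sm\le|y|_\sm\le\vep_0$ keeps us inside the region where the $c_{i+j}$ bounds hold.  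The remaining terms are already manifestly of the form $\sum_{k,\ell}Q_{k\ell}\fB^{k\ell}_{ij}(x,y,Q)$: group the two linear-in-$Q$ terms by symmetrizing the index (writing $g_{i(n+k)}(x,y)Q_{jk}=\sum_{k,\ell}Q_{k\ell}\cdot\tfrac12(\dt_{j\ell}g_{i(n+k)}+\dt_{jk}g_{i(n+\ell)})$, or more simply absorb the $y$-dependence into a single coefficient), and for the quadratic term write one factor of $Q$ explicitly and leave the other inside $\fB$, so $\fB^{k\ell}_{ij}$ acquires a summand $\sum_m g_{(n+k)(n+m)}(x,y)Q_{\ell m}$, which is linear in $Q$ and hence \bddd\ on $\hat O_\bt$ by Example \ref{bddd_example}(2).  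Collecting, $\fA^k_{ij},\fB^{k\ell}_{ij}$ are \bddd.

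\textbf{Step 2: the two-sided bound.}  For the inequality $\tfrac12\sm_{ij}\le\mu_{ij}\le2\sm_{ij}$, I would argue as follows.  By Assumption \ref{assume_basic} we already have $\tfrac1{c_0}\sm_{ij}(x)\le g_{ij}(x,y)\le c_0\sm_{ij}(x)$; and from the difference formula of Step 1 the correction $\mu_{ij}-g_{ij}(x,0)$ (equivalently $\mu_{ij}-\sm_{ij}$) is bounded in operator norm, relative to $\sm$, by a constant times $|y|_\sm+|Q|_\sm+|Q|_\sm^2$, since $\fA^k_{ij}$ and $\fB^{k\ell}_{ij}$ are \bddd\ and $|Q|_\sm\le1$.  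More precisely, contracting with an arbitrary covector $\zt=\zt_i\dd x^i$, one gets $|(\mu_{ij}-\sm_{ij})\zt^i\zt^j|\le C(|y|_\sm+|Q|_\sm)|\zt|_\sm^2$ for a constant $C$ depending only on the finite covering and the $c_\ell$.  Here the passage from the coordinate bound on $\fA,\fB$ to the invariant operator-norm bound uses the uniform equivalence of $\sm_{ij}(x)$ and the Euclidean metric on each chart, which is part of \bddd ness of $\sm_{ij}$ and $\sm^{ij}$ (Example \ref{bddd_example}(1)).  Now choose $\vep_0$ small enough (shrinking the original $\vep_0$ from Assumption \ref{assume_basic}) that $C(\vep_0+\vep_0)\le\tfrac12$; since also $c_0\ge1$, one would in fact want to first reduce to the case $g_{ij}(x,0)=\sm_{ij}(x)$ exactly --- which holds by definition --- and then $\tfrac12\sm_{ij}\le g_{ij}(x,0)-\tfrac12\sm_{ij}\le\mu_{ij}\le g_{ij}(x,0)+\tfrac12\sm_{ij}\le 2\sm_{ij}$ (using $\tfrac12\le c_0$ harmlessly, or just that $c_0$-equivalence was already built in).  Finally one replaces $\vep_0$ by this smaller value throughout, which is legitimate since all the \bddd ness statements and the inclusion $\{|y|_\sm\le\vep_0\}\subset U$ are preserved under shrinking.

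\textbf{Main obstacle.}  The calculation is entirely routine; the only genuine care needed is bookkeeping: (a) making sure the \bddd ness of the integrands $\int_0^1(\pl_{y_k}g_{ij})(x,ty)\,\dd t$ survives the integration --- it does, because differentiation under the integral sign in $x,y$ just differentiates $g_{AB}$, and $|ty|_\sm\le\vep_0$, so the same constants $c_\ell$ work, possibly after relabeling; and (b) correctly writing the linear and quadratic $Q$-terms in the stated form $\sum_{k\ell}Q_{k\ell}\fB^{k\ell}_{ij}$ without double-counting when symmetrizing over $(k,\ell)$.  There is no analytic difficulty and no choice of $\vep_0$ that could fail to exist, because every relevant quantity is continuous and the region $\hat O_\bt$ is compact; the lemma is essentially a packaging statement making precise that $\mu_{ij}$ is a \bddd\ perturbation of $\sm_{ij}$ controlled linearly by $(y,Q)$.
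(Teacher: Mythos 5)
Your proposal is correct. For the expansion $\mu_{ij}-\sm_{ij}=\sum_k y_k\,\fA^k_{ij}+\sum_{k,\ell}Q_{k\ell}\,\fB^{k\ell}_{ij}$ you do exactly what the paper does --- Taylor's theorem (Hadamard's lemma) in the fiber variable $y$ for $g_{ij}(x,y)-g_{ij}(x,0)$, and a reorganization of the $Q$-linear and $Q$-quadratic terms --- only spelled out more explicitly. Where you genuinely diverge is the two-sided bound: the paper proves $\tfrac12\sm\le\mu\le2\sm$ independently of the expansion, by noting that $\mu_{ij}=\ip{X_i+\sum_kQ_{ik}\pl/\pl y_k}{X_j+\sum_\ell Q_{j\ell}\pl/\pl y_\ell}$ is automatically positive definite and that its eigenvalues relative to $\sm_{ij}(x)$ are continuous on the compact region $\{|y|_\sm\le\vep_0,\ |Q|_\sm\le\vep_0\}$ and equal to $1$ at $y=Q=0$, so shrinking $\vep_0$ suffices; you instead deduce the bound quantitatively from the expansion, estimating $|(\mu-\sm)(\zt,\zt)|\le C(|y|_\sm+|Q|_\sm)\,|\zt|_\sm^2$ and choosing $\vep_0$ with $2C\vep_0\le\tfrac12$. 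Both arguments are sound. Yours makes the first assertion a corollary of the second and yields an explicit admissible $\vep_0$ in terms of the constants of Assumption \ref{assume_basic}; the paper's is softer and exploits the geometric meaning of $\mu$ as the induced metric of the graph, which gives positive definiteness for free and avoids any comparison between $\sm$ and the Euclidean metric of the charts. The one point you rightly flag --- passing from componentwise bounds on $\fA,\fB$ to the $\sm$-invariant estimate needs the chart coordinates to be uniformly compatible with $\sm$ --- is harmless: it is implicit in the finite covering of the compact $L$ (cf.\ Example \ref{bddd_example}(i), where the same fact is used for the Christoffel symbols), and the paper's eigenvalue argument sidesteps it altogether.
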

\begin{proof}
    For the first assertion, note that
    \begin{align} \label{muij_dot}
        \mu_{ij}(x,y,Q) &= \ip{X_i + \sum_k Q_{ik}\frac{\pl}{\pl y_k}}{X_j + \sum_\ell Q_{j\ell}\frac{\pl}{\pl y_\ell}} \quad\text{at }(x,y) ~,
    \end{align}
    and is thus always positive definite.  Its eigenvalues with respect to $\sm_{ij}(x)$ are continuous functions on the tubular neighborhood of the zero section in $T^*L\op\Sym^2(T^*L)$.  Since $|y|_\sm\leq\vep_0$ and $|Q|_\sm\leq\vep_0$ defines a compact region, the first assertion follows.

    The second assertion follows directly from the expression \eqref{muij} and Taylor's theorem on $g_{AB}(x,y)$ in the $y$-variable.
\end{proof}

By using the condition that $L$ is a minimal Lagrangian, we deduce the following lemma.

\begin{lem} \label{lem_metric_asmp}
    Let $\mu^{ij}(x,y,Q)$ be the inverse of $\mu_{ij}(x,y,Q)$ \eqref{muij}.  Under Assumption \ref{assume_basic}, for any $k\in\{1,\ldots,n\}$,
    \begin{align*}
        \sum_{i,j}\mu^{ij}\cdot\om(\ambn_{X_i}X_j,X_k) = \sum_{\ell} y_\ell\,\fC^\ell_k(x,y,Q) + \sum_{\ell,m} Q_{\ell m}\,\fD^{\ell m}_k(x,y,Q) ~,
    \end{align*}
    where $\fC^\ell_k(x,y,Q), \fD^{\ell m}_{k}(x,y,Q)$ are functions {\bddd}.
\end{lem}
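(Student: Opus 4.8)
The plan is to evaluate the quantity $\sum_{i,j}\mu^{ij}\,\om(\ambn_{X_i}X_j,X_k)$ at $y=0$, $Q=0$ and show that it vanishes there; the conclusion will then follow from Taylor's theorem exactly as in Lemma \ref{lem_muij}. To set this up, first observe that by \eqref{muij_dot} and Example \ref{bddd_example}, the expression on the left is a ratio of \bddd{} functions (the matrix $\mu^{ij}$ is the inverse of the \bddd{} matrix $\mu_{ij}$, which by Lemma \ref{lem_muij} stays uniformly positive definite, so $\mu^{ij}$ is itself \bddd); likewise $\om(\ambn_{X_i}X_j,X_k)$ is \bddd{} since the metric coefficients and their Christoffel symbols are. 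Hence the left-hand side is \bddd, and writing $y_\ell$, $Q_{\ell m}$ for the obvious linear coordinates, it suffices to show the function vanishes on the locus $\{y=0,\,Q=0\}$ and then invoke the integral form of Taylor's theorem in the $(y,Q)$ variables (with remainder coefficients that are again \bddd).

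The key step is therefore the identity at $y=0$: I claim that $\sum_{i,j}\sm^{ij}(x)\,\om(\ambn_{X_i}X_j,X_k)\big|_{y=0}=0$ for every $k$. Along the zero section, $X_i$ restricts to $\frac{\pl}{\pl x^i}$, which is tangent to $L$, and $\sm^{ij}$ is the inverse of the induced metric, so $\sum_{i,j}\sm^{ij}\ambn_{X_i}X_j$ computed at a point of $L$ is, up to tangential terms, the trace of the second fundamental form of $L$ — that is, the mean curvature vector $H$ — plus a tangential piece. Pairing against $\om(-,X_k)$ with $X_k$ tangent to $L$ kills the tangential piece because $\om|_L\equiv0$, leaving $\om(H,\frac{\pl}{\pl x^k})=\af_L(\frac{\pl}{\pl x^k})$. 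Since $L$ is a \emph{minimal} Lagrangian, $H=0$, so $\af_L=0$ and the whole expression vanishes on the zero section. I would write this out carefully by splitting $\ambn_{X_i}X_j$ into its $TL$ and $NL$ components along $L$ and using $\om(TL,TL)=0$; the minimality hypothesis enters exactly here, which is why this lemma, unlike Lemma \ref{lem_muij}, needs it.

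Finally, with vanishing on $\{y=0,Q=0\}$ established, apply Taylor expansion: write $f(x,y,Q)=f(x,0,0)+\sum_\ell y_\ell\int_0^1 \pl_{y_\ell}f(x,ty,tQ)\,\dd t+\sum_{\ell,m}Q_{\ell m}\int_0^1\pl_{Q_{\ell m}}f(x,ty,tQ)\,\dd t$ with $f(x,0,0)=0$, and define $\fC^\ell_k$, $\fD^{\ell m}_k$ to be these integrals. Since $f$ is \bddd{} on $\hat O_\bt$ and the segment from $(x,0,0)$ to $(x,y,Q)$ stays in the region where the bounds hold, differentiating under the integral sign shows $\fC^\ell_k$, $\fD^{\ell m}_k$ are \bddd. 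I expect the only genuine obstacle is the bookkeeping in the second paragraph — correctly isolating the mean curvature vector from $\sum\sm^{ij}\ambn_{X_i}X_j$ along the zero section while controlling the tangential corrections — but since $\om$ annihilates $TL\otimes TL$, all tangential corrections drop out upon pairing with $X_k$, and minimality finishes it; everything else is the same \bddd{}-plus-Taylor machinery already used above.
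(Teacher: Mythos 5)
Your proposal is correct and follows essentially the same route as the paper: the heart of the matter is the vanishing of $\sum_{i,j}\sm^{ij}\,\om(\ambn_{X_i}X_j,X_k)$ along the zero section, where the Lagrangian condition kills the tangential part and minimality kills the normal (mean curvature) part, after which everything reduces to Taylor expansion with \bddd{} coefficients. The only cosmetic difference is that the paper splits $\mu^{ij}=\sm^{ij}+(\mu^{ij}-\sm^{ij})$ and invokes Lemma \ref{lem_muij}, whereas you Taylor-expand the full contraction in $(y,Q)$ directly; both are the same idea.
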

\begin{proof}
    To start, consider the $Q$-independent function $\sum_{i,j}\sm^{ij}\cdot\om(\ambn_{X_i}X_j,X_k)$.  When $y = 0$,
    \begin{align*}
        \sum_{i,j}\sm^{ij}\cdot\om(\ambn_{X_i}X_j,X_k) &= \sum_{i,j}\sm^{ij}\cdot\om(\ambn_{\frac{\pl}{\pl x^i}}\frac{\pl}{\pl x^j},\frac{\pl}{\pl x^k}) \\
        &= - \sum_{i,j}\sm^{ij}\left\langle{\ambn_{\frac{\pl}{\pl x^i}}\frac{\pl}{\pl x^j}}, {J(\frac{\pl}{\pl x^k})}\right\rangle \\
        &= - \left\langle{\sum_{i,j}\sm^{ij}\ambn^\perp_{\frac{\pl}{\pl x^i}}\frac{\pl}{\pl x^j}}, {J(\frac{\pl}{\pl x^k})}\right\rangle = 0 ~,
    \end{align*}
    where we have used the fact that $L$ is a minimal Lagrangian.  By writing
    \begin{align*}
        \sum_{i,j}\mu^{ij}\cdot\om(\ambn_{X_i}X_j,X_k) &= \sum_{i,j}(\mu^{ij}-\sm^{ij})\cdot\om(\ambn_{X_i}X_j,X_k) + \sum_{i,j}\sm^{ij}\cdot\om(\ambn_{X_i}X_j,X_k)
    \end{align*}
    and applying Lemma \ref{lem_muij}, this lemma follows.
\end{proof}

\section{Asymptotics of an Exact Graph}

Under Assumption \ref{assume_basic}, the differential of a function $u:L\to\BR$ with $|\dd u|_\sm\leq\vep_0$ defines a Lagrangian submanifold $F_u:L\to U\subset T^*L$.  The main purpose of this section is to derive asymptotics of the quantities related to $\Om_\vph$ given by Construction \ref{const_holo_volume}.  The constant $c$ in the estimates may change from line to line, but is always independent of $u$.

As in section \ref{sec_geom_cot}, denote by $D^2 u = \sum_{i,j}u_{;ij}\dd x^i\ot\dd x^j$ the Hessian of $u$ with respect to $\sm$.  By plugging \eqref{1st_der_F} into \eqref{ambient_metric}, the coefficients of the induced metric by $F_u$ are exactly $\mu_{ij}(x,\dd u,D^2u)$, with $\mu_{ij}$ defined by \eqref{muij}.  
It shall be clear that $\dd u$ means the vector $[u_{;i}]$, and $D^2u$ means the coefficient matrix $[u_{;k\ell}]$.

The first task is to establish the expansion of $\ta(F_u)$ given by Lemma \ref{lem_mean_curvature_potential}.  Note that $\ta(F_u)$ actually depends on $\dd u$ (the position) and $D^2u$ (the tangent space).
\begin{prop} \label{prop_angle_asmp}
    There exists a constant $c>0$ such that
    \begin{align*}
        \left| \ta(F_u) - \Dt_\sm u \right| &\leq c(|\dd u|_\sm^2 + |D^2 u|_\sm^2)
    \end{align*}
    for any smooth $u:L\to\BR$ with $|\dd u|_\sm\leq\vep_0$ and $|D^2u|_\sm\leq\vep_0$.
\end{prop}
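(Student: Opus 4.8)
The plan is to compute $\theta(F_u)$ directly from its definition as the argument of $\Omega_\varphi/\dd V_{F_u(L)}$ along $F_u(L)$, and to Taylor-expand in the jet $(\dd u, D^2 u)$ about the zero section, where $L$ itself sits and where $\theta \equiv 0$ (since $L$ is minimal Lagrangian, so $\Omega_\varphi|_L = \Omega_L = \dd V_L$). Concretely, by \eqref{1st_der_F} the tangent space of $F_u(L)$ is spanned by $X_i + \sum_j u_{;ij}\,\partial/\partial y_j$ at the point $(x,\dd u)$, and the induced volume form is $\sqrt{\det \mu_{ij}(x,\dd u,D^2 u)}$ times the coordinate form. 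Pairing $\Omega_\varphi$ — which by \eqref{Weinstein_cpx_volumeform} is the parallel-transport/exponential-gauge extension of $\Omega_L$ along the fibers, so $\Omega_\varphi(x,0) = \Omega_L(x)$ and its $y$-derivatives are controlled via $-\ii\kappa\,\tau_L\otimes\Omega_\varphi$ — against this $n$-frame gives a complex number whose argument is $\theta(F_u)$. All the ingredients ($g_{AB}(x,y)$, their Christoffel symbols, $\mu_{ij}$, hence $\det\mu_{ij}$ and its reciprocal square root, and the components of $\Omega_\varphi$ in the coordinate coframe) are \bddd\ functions of $(x,\dd u,D^2u)$ by Example \ref{bddd_example} and Lemma \ref{lem_muij}, so the whole expression $\theta(F_u)$ is a \bddd\ function $\Theta(x,\dd u,D^2u)$ with $\Theta(x,0,0)=0$.

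The heart of the matter is then a first-order Taylor expansion with remainder: $\theta(F_u) = \sum_k \partial_{y_k}\Theta(x,0,0)\,u_{;k} + \sum_{k\ell}\partial_{Q_{k\ell}}\Theta(x,0,0)\,u_{;k\ell} + O(|\dd u|_\sm^2 + |D^2u|_\sm^2)$, where the quadratic remainder bound is uniform by the \bddd\ property on the compact region $|\dd u|_\sm, |D^2u|_\sm \le \vep_0$. So it remains to identify the two linear terms. The $Q$-linear term I expect to be exactly $\sum_{k\ell}\sigma^{k\ell}(x)\,u_{;k\ell} = \Delta_\sigma u$: linearizing $\mu_{ij}$ at $Q=0,y=0$ gives $\mu_{ij}\approx\sigma_{ij}$, so the volume-form factor contributes, to first order in $Q$, a term $-\tfrac12\sigma^{ij}\cdot(\partial_{Q}\mu_{ij})$ paired appropriately; combined with the way $\Omega_L$ (which restricts to $\dd V_L$ on $TL$) detects the "tilt" $u_{;ij}$ of the tangent plane, one gets the trace $\sigma^{ij}u_{;ij}$. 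The cleanest way to see this is to recall the classical computation of the Lagrangian angle for a graph over its tangent space in a Kähler manifold — for the flat model $\theta = \arg\det(I + \ii\,\mathrm{Hess}\,u)$, whose linearization is $\tr(\mathrm{Hess}\,u) = \Delta u$ — and to observe that passing to the general Weinstein neighborhood only perturbs this at quadratic order because $\mu_{ij}(x,0,0)=\sigma_{ij}(x)$ and $\Omega_\varphi(x,0)=\Omega_L(x)$ agree with the model to first order. The $y$-linear term must vanish: this is precisely where minimality of $L$ enters a second time. Indeed $\partial_{y_k}\Theta(x,0,0)$ is the derivative of the Lagrangian angle of $L$ itself moving in the fiber direction at fixed (zero) tangent-plane tilt, which by \eqref{mean_curvature_angle} (or \eqref{Lag_cpx_volumeform}) is governed by $\alpha_L = -\dd\theta - \kappa\,\tau_L|_L$; since $L$ is minimal, $\alpha_L = 0$ and $\theta|_L=0$, forcing the $y$-linear part of $\Theta$ at the zero section to be the $\kappa\tau_L$ contribution — but that is absorbed because Lemma \ref{lem_mean_curvature_potential} already separated off the $-\kappa u$ term, so $\Theta$ in the present normalization has no surviving $y$-linear part. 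I would spell this out via Lemma \ref{lem_metric_asmp}, whose conclusion $\sum_{ij}\mu^{ij}\,\om(\ambn_{X_i}X_j,X_k)$ has no $y$-constant, $Q$-constant term — exactly the algebraic shadow of minimality needed to kill the would-be linear-in-$y$ contribution to $\theta(F_u)$.

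The main obstacle, as usual in this circle of arguments, is bookkeeping: carefully writing $\Omega_\varphi$ in the coordinate coframe $\{\dd x^i, \eta_i\}$ (or $\{\dd x^i,\dd y_i\}$), pairing it against the explicit tangent frame \eqref{1st_der_F}, extracting the argument, and keeping track of which terms are genuinely first order versus quadratic — all while verifying that every intermediate quantity is \bddd\ so that the remainder estimate is uniform in $u$. I anticipate that the cleanest route avoids computing the argument of a complicated complex number head-on; instead one differentiates the defining relation $\Omega_\varphi = e^{\ii\theta(F_u)}\,\Omega_{F_u(L)}$ along a path $t\mapsto F_{tu}$ from $L$, using \eqref{Weinstein_cpx_volumeform} to handle $\ambn\Omega_\varphi$ and Lemma \ref{lem_muij}/Example \ref{bddd_example} to handle $\partial_t\log\dd V_{F_{tu}(L)}$, integrates from $t=0$ to $t=1$, and reads off that the $t$-derivative at $t=0$ is $\Delta_\sigma u$ (the minimality killing the position-dependent part) while the difference of the integrand from its value at $t=0$ is $O(|\dd u|_\sigma^2 + |D^2u|_\sigma^2)$. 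This reduces the whole proposition to the already-established Lemmas \ref{lem_muij}, \ref{lem_metric_asmp} plus one application of Taylor's theorem.
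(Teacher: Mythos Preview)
Your plan --- especially the last paragraph --- is essentially the paper's: differentiate along the radial path $s\mapsto F_{su}$ (so that $\ambn_{\partial/\partial s}\Om_\vph = 0$), obtain
\[
\frac{\partial\ta}{\partial s} = \sum_{i,j}\mu^{ij}\,\om\bigl(\td F_j,\ambn_{\partial/\partial s}\td F_i\bigr),
\]
apply the second-order Taylor formula $\ta|_{s=1}=\partial_s\ta|_{s=0}+\int_0^1(1-s)\,\partial_s^2\ta\,\dd s$, show the first term equals $\Dt_\sm u$, and bound the integral by $c(|\dd u|_\sm^2+|D^2u|_\sm^2)$ using that $|\td F_i|\leq c$, $|\ambn_{\partial_s}\td F_i|\leq c(|\dd u|_\sm+|D^2u|_\sm)$, $|\ambn_{\partial_s}^2\td F_i|\leq c(|\dd u|_\sm^2+|D^2u|_\sm^2)$.

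There is, however, a real misstep in how you dispatch the $y$-linear term.  You attribute its vanishing to minimality via Lemma~\ref{lem_metric_asmp}, but that lemma concerns $\sum\mu^{ij}\om(\ambn_{X_i}X_j,X_k)$, whereas the $y$-linear contribution at $s=0$ is $\sum_{i,j}\sm^{ij}\om\bigl(X_j,\ambn_{\partial/\partial y_k}X_i\bigr)$ --- a different pairing (third slot is $\partial/\partial y_k$, not $X_k$).  Your appeal to \eqref{mean_curvature_angle} and the ``absorbed $\kp u$'' is also off: that formula governs tangential derivatives of $\ta$ along a Lagrangian, not fiber derivatives of $\Theta$, and $\kp u$ plays no role in this proposition.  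The actual mechanism is: write $\ambn_{\partial/\partial y_k}X_i = \ambn_{X_i}\partial/\partial y_k + [\partial/\partial y_k,X_i]$ and use $\ambn\om=0$ to get $-\om(\ambn_{X_i}X_j,\partial/\partial y_k)+\Ld^k_{ji}$; at $y=0$ the tangential part of $\ambn_{X_i}X_j$ contributes $\Ld^k_{ij}$, cancelling $\Ld^k_{ji}$, while the normal part $\ambn^\perp_{\partial_i}\partial_j$ pairs to zero because both it and $\partial/\partial y_k$ lie in $NL$ and $\om|_{NL\times NL}=0$ for a Lagrangian.  So it is the \emph{Lagrangian} condition, not minimality, that kills this term; minimality enters only through $\ta|_{s=0}=0$.  (The $Q$-linear term then falls out immediately from $\om(X_j,\partial/\partial y_k)=\dt_{jk}$, giving $\sum\sm^{ij}u_{;ij}=\Dt_\sm u$ --- no need for the flat-model heuristic.)
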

\begin{proof}
    Denote by $\td{F}(x,s):L\times[0,1]\to T^*L$ the one-parameter family of Lagrangians given by $s\,\dd u$ for $s\in[0,1]$.  Since
    \begin{align} \label{tildeF_i}
        \td{F}_i &= \frac{\pl\td{F}}{\pl x^i} = X_i + s\sum_{j}u_{;ij}\frac{\pl}{\pl y_j} ~,
    \end{align}
    the induced metric by $\td{F}(x,s)$ for any fixed $s$ has coefficients $\mu_{ij}(x,s\dd u,sD^2u)$.  Write the variational field as
    \begin{align} \label{tildeF_s}
        \frac{\pl}{\pl s} &= \frac{\pl\td{F}}{\pl s} = \sum_i u_{;i}\frac{\pl}{\pl y_i} ~.
    \end{align}

    The partial derivative of $\ta(\td{F}(x,s))$ in $s$ can be computed by
    \begin{align}
        \frac{\pl}{\pl s} e^{\ii\ta} &= \frac{\pl}{\pl s}\left(\frac{\Om_\vph(\td{F}_1,\ldots,\td{F}_n)}{\sqrt{\det\mu}}\right) \notag \\
        \Rightarrow\quad \ii\frac{\pl\ta}{\pl s} &= - \oh \frac{\pl\log\det\mu}{\pl s} + \frac{e^{-\ii\ta}}{\sqrt{\det\mu}}\frac{\pl}{\pl s}\Om_\vph(\td{F}_1,\ldots,\td{F}_n) \label{theta_in_s}
    \end{align}
    
    Since $\Om_\vph$ is constructed by parallel transport along the radial curves of the fibers of $T^*L$ and $\{\td{F}(x,s)\}_{s\in[0,1]}$ is exactly the radial curve for every $x$, $\ambn_{\frac{\pl}{\pl s}}\Om_\vph = 0$.  It follows that
    \begin{align} \label{Om_in_s}
        \frac{\pl}{\pl s}\Om_\vph(\td{F}_1,\ldots,\td{F}_n) &= \Om_\vph(\ambn_{\frac{\pl}{\pl s}}\td{F}_1,\ldots,\td{F}_n) + \cdots + \Om_\vph(\td{F}_1,\ldots,\ambn_{\frac{\pl}{\pl s}}\td{F}_n) ~.
    \end{align}
    Because $\td{F}(L,s)$ is Lagrangian for every $s\in[0,1]$,
    \begin{align*}
        \ambn_{\frac{\pl}{\pl s}}\td{F}_i &= \sum_{j}\mu^{ij} \left( \ip{\ambn_{\frac{\pl}{\pl s}}\td{F}_i}{\td{F}_j}\td{F}_j + \ip{\ambn_{\frac{\pl}{\pl s}}\td{F}_i}{J(\td{F}_j)}J(\td{F}_j) \right) ~.
    \end{align*}
    Due to the fact that $\Om_\vph$ is an $(n,0)$-form, plugging this into \eqref{Om_in_s} gives
    \begin{align} \label{Om_in_s_a}
        \frac{\pl}{\pl s}\Om_\vph(\td{F}_1,\ldots,\td{F}_n) &= \sum_{i,j}\mu^{ij} \left( \ip{\ambn_{\frac{\pl}{\pl s}}\td{F}_i}{\td{F}_j} + \ii\ip{\ambn_{\frac{\pl}{\pl s}}\td{F}_i}{J(\td{F}_j)} \right)\Om_\vph(\td{F}_1,\ldots,\td{F}_n)
    \end{align}

    With \eqref{Om_in_s_a}, the real part of \eqref{theta_in_s} recovers the standard formula for $\frac{\pl}{\pl s}\log\det\mu$, and the imaginary part becomes
    \begin{align} \label{theta_in_s_a}
        \frac{\pl\ta}{\pl s} &= \sum_{i,j} \mu^{ij}\ip{\ambn_{\frac{\pl}{\pl s}}\td{F}_i}{J(\td{F}_j)} = \sum_{i,j} \mu^{ij}\,\om\left({\td{F}_j}, {\ambn_{\frac{\pl}{\pl s}}\td{F}_i}\right) ~.
    \end{align}
    Since $s\equiv 0$ corresponds to $L$ and the restriction of $\Om_\vph$ on the tangent space of $L$ is the volume form of $L$, $\ta$ vanishes when $s = 0$.  Therefore,
    \begin{align} \label{theta_Taylor}
        \ta|_{s=1} &= \left.\frac{\pl\ta}{\pl s}\right|_{s=0} + \int_0^1 (1-s)\frac{\pl^2\ta}{\pl s^2}\,\dd s ~.
    \end{align}

    {\textit{$1^{\text{st}}$ term on the right hand side of \eqref{theta_Taylor}}}.  By \eqref{tildeF_i} and \eqref{tildeF_s},
    \begin{align} \label{tildeF_is}
        \ambn_{\frac{\pl}{\pl s}}\td{F}_i &= \sum_k u_{;ik}\frac{\pl}{\pl y_k} + \sum_k u_{;k} \ambn_{\frac{\pl}{\pl y_k}}X_i + s\sum_{k,\ell} u_{;k}u_{;i\ell}\ambn_{\frac{\pl}{\pl y_k}}\frac{\pl}{\pl y_\ell} ~.
    \end{align}
    Together with \eqref{theta_in_s_a}, it gives
    \begin{align*}
        \left.\frac{\pl\ta}{\pl s}\right|_{s=0} &= \sum_{i,j,k} \sm^{ij}\,u_{;ik}\, \om(X_j, \frac{\pl}{\pl y_k}) + \sum_{i,j,k} u_{;k}\,\sm^{ij}\,\om( X_j, \ambn_{\frac{\pl}{\pl y_k}}X_i ) ~.
    \end{align*}
    The first term is $\Dt_\sm u$.  We \emph{claim} that $\om( \ambn_{\frac{\pl}{\pl y_k}}X_i, X_j )$ vanishes when $s=0$, and thus the second term is zero.  Note that
    \begin{align*}
        \ambn_{\frac{\pl}{\pl y_k}}X_i &= \ambn_{X_i}{\frac{\pl}{\pl y_k}} + [\frac{\pl}{\pl y_k}, \frac{\pl}{\pl x^i} + \sum_{j,\ell}\Ld^j_{i\ell}\,y_j\,\frac{\pl}{\pl y_\ell}] = \ambn_{X_i}{\frac{\pl}{\pl y_k}} + \sum_{\ell} \Ld_{\ell i}^k \frac{\pl}{\pl y_\ell} ~.
    \end{align*}
    It follows that
    \begin{align*}
        \om( X_j, \ambn_{\frac{\pl}{\pl y_k}}X_i ) &= \om( X_j, \ambn_{X_i}{\frac{\pl}{\pl y_k}} ) + \sum_\ell \Ld_{\ell i}^k\,\om(X_j, \frac{\pl}{\pl y_\ell}) \\
        &= - \om( \ambn_{X_i}X_j, {\frac{\pl}{\pl y_k}} ) + \Ld_{ji}^k ~,
    \end{align*}
    where the last equality uses the fact that $\ambn\om = 0$ and $\om(X_j, \frac{\pl}{\pl y_k}) = \dt_{jk}$.
    When $s=0$, $y = 0$,
    \begin{align*}
        \ambn_{X_i}X_j &= \sum_\ell\Ld_{ij}^\ell\frac{\pl}{\pl x^\ell} + \ambn^\perp_{\frac{\pl}{\pl x^i}}\frac{\pl}{\pl x^j} ~,
    \end{align*}
    and hence
    \begin{align*}
        \om( X_j, \ambn_{\frac{\pl}{\pl y_k}}X_i ) &= - \Ld_{ij}^k - \om( \ambn^\perp_{\frac{\pl}{\pl x^i}}\frac{\pl}{\pl x^j}, {\frac{\pl}{\pl y_k}} ) + \Ld_{ji}^k = 0 
        ~.
    \end{align*}
    This finishes the proof of the claim.

    {\textit{$2^{\text{nd}}$ term on the right hand side of \eqref{theta_Taylor}}}.  Since $\ambn\om = 0$, the derivative of \eqref{theta_in_s_a} in $s$ gives
    \begin{align} \begin{split}
        \frac{\pl^2\ta}{\pl s^2} &= -\sum_{i,j,k\ell} \mu^{ik}\frac{\pl\mu_{k\ell}}{\pl s}\mu^{j\ell}\,\om\left({\td{F}_j}, {\ambn_{\frac{\pl}{\pl s}}\td{F}_i}\right) \\
        &\quad + \sum_{i,j} \mu^{ij}\left[ \om\left( \ambn_{\frac{\pl}{\pl s}}{\td{F}_j}, {\ambn_{\frac{\pl}{\pl s}}\td{F}_i}\right) + \om\left({\td{F}_j}, {\ambn_{\frac{\pl}{\pl s}}\ambn_{\frac{\pl}{\pl s}}\td{F}_i}\right) \right] ~.
    \end{split} \label{theta_in_ss} \end{align}
    We compute the covariant derivative of \eqref{tildeF_is} in $s$:
    \begin{align} \begin{split}
        {\ambn_{\frac{\pl}{\pl s}}\ambn_{\frac{\pl}{\pl s}}\td{F}_i} &= \sum_{k,\ell} u_{;k}\,u_{;i\ell}\,\ambn_{\frac{\pl}{\pl y_k}}\frac{\pl}{\pl y_\ell} + \sum_{k,\ell} u_{;\ell}\,u_{;ik}\,\ambn_{\frac{\pl}{\pl y_\ell}}\frac{\pl}{\pl y_k} \\
        &\quad + \sum_{k,\ell} u_{;\ell}\,u_{;k}\, \ambn_{\frac{\pl}{\pl y_\ell}}\ambn_{\frac{\pl}{\pl y_k}}X_i + s\sum_{k,\ell,m} u_{;m}\,u_{;k}\,u_{;i\ell}\,\ambn_{\frac{\pl}{\pl y_m}}\ambn_{\frac{\pl}{\pl y_k}}\frac{\pl}{\pl y_\ell} ~.
    \end{split} \label{tildeF_iss} \end{align}
    With $|\dd u|_\sm\leq\vep_0$ and $|D^2u|_\sm\leq\vep_0$, it follows from \eqref{tildeF_i}, \eqref{tildeF_is} and \eqref{tildeF_iss} that
    \begin{align*}
        |\td{F}_i| &\leq c ~,  & |\ambn_{\frac{\pl}{\pl s}}\td{F}_i| &\leq c(|\dd u|_\sm + |D^2 u|_\sm) ~,  & |{\ambn_{\frac{\pl}{\pl s}}\ambn_{\frac{\pl}{\pl s}}\td{F}_i}| &\leq c(|\dd u|_\sm^2 + |D^2 u|_\sm^2)
    \end{align*}
    for some constant $c > 0$.  The norm is computed by $\vph^*g$ at $\td{F}(x,s)$.

    According to Lemma \ref{lem_muij}, the derivative of $\mu_{ij}$ in $s$ is
    \begin{align*}
        \frac{\pl\mu_{ij}}{\pl s}(x,s\dd u,sD^2u)
        &= \frac{\pl}{\pl s}\left(\sum_{k} su_{;k}\,\fA_{ij}^k(x,s\dd u) + \sum_{k,\ell} su_{;k\ell}\,\fB_{ij}^{k\ell}(x,s\dd u,sD^2u) \right) ~.
    \end{align*}
    Since $\fA_{ij}^k(x,y)$ and $\fB_{ij}^{k\ell}(x,y,Q)$ are \bddd, $|\frac{\pl\mu_{ij}}{\pl s}|$ is no greater than $c(|\dd u|_\sm + |D^2 u|_\sm)$ for some constant $c>0$.

    By applying these estimates to \eqref{theta_in_ss}, one finds that the absolute value of the last term in \eqref{theta_Taylor} is bounded from above by $c(|\dd u|_\sm^2 + |D^2 u|_\sm^2)$.  Note that $|\dd u|_\sm$ and $|D^2u|_\sm$ are assumed to be no greater than $\vep_0$, and only their lowest order terms matter.
\end{proof}

The next step is to study the (spatial) derivative of $\ta(F_u)$.
\begin{lem} \label{lem_theta_in_k}
    There exist $\fE^i_k(x,y,Q), \fF^{ij}_k(x,y,Q)$ which are {\bddd} such that
    \begin{align*}
        (\ta(F_u)+\kp u)_{;k} &= \sum_{i,j}\mu^{ij}u_{;jki} + \sum_{i}u_{;i}\,\fE^i_k(x,\dd u,D^2u) + \sum_{i,j}u_{;ij}\,\fF^{ij}_k(x,\dd u,D^2u)
    \end{align*}
    for any smooth $u:L\to\BR$ with $|\dd u|_\sm\leq\vep_0$ and $|D^2u|_\sm\leq\vep_0$.
\end{lem}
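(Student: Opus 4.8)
The plan is to differentiate, with respect to $x^k$, the identity that defines $\ta(F_u)$, mirroring the proof of Proposition~\ref{prop_angle_asmp}. Recall that $\ta(F_u)$ is characterized by $\Om_\vph(F_1,\dots,F_n)=e^{\ii\ta(F_u)}\sqrt{\det\mu}$, where $F_i=\pl F_u/\pl x^i=X_i+\sum_j u_{;ij}\frac{\pl}{\pl y_j}$ by \eqref{1st_der_F} and $\mu=[\mu_{ij}(x,\dd u,D^2u)]$. Two ingredients feed into the differentiation: $\ambn_{F_k}F_i=\ambn_{F_i}F_k$, since the $F_i$ are push-forwards of coordinate vector fields; and $\ambn_{F_k}\Om_\vph=-\ii\kp\,u_{;k}\,\Om_\vph$, which follows from \eqref{Weinstein_cpx_volumeform} once one checks that $\tau_L(F_k)$ equals the fibre coordinate $y_k$, hence $u_{;k}$ along $F_u(L)$. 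Applying $\pl/\pl x^k$ to the defining identity, expanding $\frac{\pl}{\pl x^k}\bigl(\Om_\vph(F_1,\dots,F_n)\bigr)=(\ambn_{F_k}\Om_\vph)(F_1,\dots,F_n)+\sum_i\Om_\vph(F_1,\dots,\ambn_{F_k}F_i,\dots,F_n)$, and decomposing each $\ambn_{F_k}F_i$ in the frame $\{F_j,J(F_j)\}$ exactly as in the proof of Proposition~\ref{prop_angle_asmp} (using that $F_u(L)$ is Lagrangian and $\Om_\vph$ is an $(n,0)$-form), the real part reproduces the standard formula for $\oh\frac{\pl}{\pl x^k}\log\det\mu$, and the imaginary part — after the $\kp u_{;k}$ terms cancel, which is precisely why $\ta(F_u)+\kp u$ is the natural object — yields
\[
    (\ta(F_u)+\kp u)_{;k}=\sum_{i,j}\mu^{ij}\,\om\bigl(F_j,\ambn_{F_k}F_i\bigr).
\]

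The next step rearranges the right-hand side into the shape required by Lemma~\ref{lem_metric_asmp}. The cubic form $T_{abc}:=\om(\ambn_{F_a}F_b,F_c)$ is totally symmetric in $a,b,c$: symmetric in $a,b$ by torsion-freeness, and symmetric in $b,c$ because differentiating the Lagrangian identity $\om(F_b,F_c)\equiv0$ along $F_a$ and using $\ambn\om=0$ gives $\om(\ambn_{F_a}F_b,F_c)=-\om(F_b,\ambn_{F_a}F_c)$. Since $\mu^{ij}$ is symmetric, $\sum_{i,j}\mu^{ij}\om(F_j,\ambn_{F_k}F_i)=-\sum_{i,j}\mu^{ij}T_{kij}=-\sum_{i,j}\mu^{ij}T_{ijk}=\sum_{i,j}\mu^{ij}\om(F_k,\ambn_{F_i}F_j)$, so that the two contracted indices now both sit inside $\ambn_{F_i}F_j$ while the free index $k$ occupies the last slot of $\om$.

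Finally, expand $\ambn_{F_i}F_j$ by \eqref{1st_der_F}. Up to terms each carrying an explicit factor $u_{;ab}$ (a second covariant derivative of $u$) — arising from the vertical parts $\sum u_{;im}\frac{\pl}{\pl y_m}$ of $F_i$ and $F_j$, and, after pairing with $\om(F_k,\,\cdot\,)$ where $F_k=X_k+\sum_p u_{;kp}\frac{\pl}{\pl y_p}$, also from the vertical part of $F_k$ — one has $\ambn_{F_i}F_j\equiv\ambn_{X_i}X_j+\sum_m\bigl(\tfrac{\pl u_{;jm}}{\pl x^i}\bigr)\frac{\pl}{\pl y_m}$. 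Using $\om(F_k,\frac{\pl}{\pl y_m})=\dt_{km}$, the middle term produces $\sum_{i,j}\mu^{ij}\tfrac{\pl u_{;jk}}{\pl x^i}$, which via $u_{;jki}=\tfrac{\pl u_{;jk}}{\pl x^i}-\Ld^p_{ik}u_{;jp}-\Ld^p_{ij}u_{;kp}$ equals $\sum_{i,j}\mu^{ij}u_{;jki}$ plus Christoffel-times-second-derivative corrections; the only surviving term with no $u_{;ab}$ factor is $\sum_{i,j}\mu^{ij}\om(X_k,\ambn_{X_i}X_j)=-\sum_{i,j}\mu^{ij}\om(\ambn_{X_i}X_j,X_k)$, which Lemma~\ref{lem_metric_asmp} expresses as $-\sum_\ell u_{;\ell}\,\fC^\ell_k-\sum_{\ell,m}u_{;\ell m}\,\fD^{\ell m}_k$ evaluated at $(x,\dd u,D^2u)$. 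Every other term carries an explicit $u_{;ab}$, and its coefficient is a contraction of $\mu^{ij}$ — which is \bddd, being the inverse of the uniformly positive-definite \bddd matrix $\mu_{ij}$ of Lemma~\ref{lem_muij} — with the \bddd connection coefficients of $g_{AB}$ from Example~\ref{bddd_example}, hence itself \bddd. Collecting the coefficients of $u_{;\ell}$ into $\fE^i_k$ and those of $u_{;\ell m}$ into $\fF^{ij}_k$ completes the proof.

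I expect the differentiation and the third-order bookkeeping to be routine once Proposition~\ref{prop_angle_asmp}'s computation is in hand; the one substantive point is the purely-ambient term $\sum\mu^{ij}\om(\ambn_{X_i}X_j,X_k)$, which carries no $u$-derivative factor and, without Lemma~\ref{lem_metric_asmp} (equivalently, without the minimality of $L$), would only be bounded rather than of the asserted form $u_{;\ell}(\cdots)+u_{;\ell m}(\cdots)$. The total-symmetry rearrangement in the second step is exactly what lets this term be fed into Lemma~\ref{lem_metric_asmp} without incurring commutator terms $[X_i,X_k]$.
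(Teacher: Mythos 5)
Your proposal is correct, and its second half --- expanding $\ambn_{F_i}F_j$ in the frame $\{X_i,\frac{\pl}{\pl y_i}\}$, extracting $\sum_{i,j}\mu^{ij}u_{;jki}$ via $\om(F_k,\frac{\pl}{\pl y_m})=\dt_{km}$, and feeding the sole term with no explicit $u$-derivative factor, $\sum_{i,j}\mu^{ij}\om(\ambn_{X_i}X_j,X_k)$, into Lemma \ref{lem_metric_asmp} (where the minimality of $L$ enters) --- is exactly the paper's computation. Where you diverge is in how you reach the starting identity $(\ta+\kp u)_{;k}=\sum_{i,j}\mu^{ij}\,\om(F_k,\ambn_{F_i}F_j)$, i.e.\ \eqref{theta_in_k}: the paper gets it in one line from Lemma \ref{lem_mean_curvature_potential}, since the mean curvature $1$-form of $F_u$ is $-\dd(\ta+\kp u)$ and $H=\sum_{i,j}\mu^{ij}\ambn^\perp_{F_i}F_j$, with the tangential parts killed by the Lagrangian condition; you instead differentiate the defining relation $\Om_\vph(F_1,\ldots,F_n)=e^{\ii\ta}\sqrt{\det\mu}$ in $x^k$, use \eqref{Weinstein_cpx_volumeform} together with $\tau_L(F_k)=u_{;k}$ to produce the $\kp u_{;k}$ cancellation, and then symmetrize the cubic form $\om(\ambn_{F_a}F_b,F_c)$ (torsion-freeness plus $\ambn\om=0$ on the Lagrangian graph) to move the free index into the last slot. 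Your route is self-contained but in effect re-proves Lemma \ref{lem_mean_curvature_potential} in graph coordinates, mirroring the $s$-derivative computation of Proposition \ref{prop_angle_asmp}; the paper's shortcut buys brevity by quoting the already-established relation between the angle and the mean curvature form, while yours makes the role of \eqref{Weinstein_cpx_volumeform} and of the total symmetry explicit. Both yield the same \bddd bookkeeping for $\fE^i_k$ and $\fF^{ij}_k$, so there is no gap.
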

\begin{proof}
    As \eqref{tildeF_i}, denote $\frac{\pl F_u}{\pl x^i}$ by $F_i$.  By Lemma \ref{lem_mean_curvature_potential}, $-\dd(\ta+\kp u)$ is $(F_u)^*(\om(H(F_u),\,\cdot\,))$.  Since $F_u(L)$ is Lagrangian,
    \begin{align} \label{theta_in_k}
        (\ta+\kp u)_{;k} &= \sum_{i,j}\om(\mu^{ij}\ambn_{F_i}^\perp F_j,-F_k) = \sum_{i,j}\om(F_k, \mu^{ij}\ambn_{F_i} F_j) ~.
    \end{align}
    
    We compute
    \begin{align*}
        \ambn_{F_i}F_j &= \ambn_{F_i}\left( X_j + \sum_{k}u_{;jk}\frac{\pl}{\pl y_k}\right) \\
        &= \sum_k\frac{\pl u_{;jk}}{\pl x^i}\frac{\pl}{\pl y_k} + \ambn_{F_i} X_j + \sum_{k}u_{;jk}\ambn_{F_i}\frac{\pl}{\pl y_k} \\
        &= \sum_k u_{;jki} \frac{\pl}{\pl y_k} + \sum_{k,\ell}(\Ld_{ji}^\ell\,u_{;\ell k} + \Ld_{ki}^\ell\,u_{;j\ell})\frac{\pl}{\pl y_k} + \ambn_{X_i}X_j \\
        &\quad + \sum_k \left( u_{;ik}\ambn_{\frac{\pl}{\pl y_k}}X_j + u_{;jk}\ambn_{X_i}\frac{\pl}{\pl y_k} \right) + \sum_{k,\ell}u_{;jk}\;u_{;i\ell}\,\ambn_{\frac{\pl}{\pl y_\ell}}\frac{\pl}{\pl y_k} ~.
    \end{align*}
    It together with \eqref{theta_in_k} and Lemma \ref{lem_metric_asmp} finishes the proof of this lemma.
\end{proof}

By taking covariant derivatives, $D$ and $D^2$, on the expression in Lemma \ref{lem_theta_in_k}, it is not hard to prove the following corollary, and we omit its proof.
\begin{cor} \label{cor_d_theta_asmp}
    There exists $c > 0$ such that if $u:L\to\BR$ is a smooth function with $|\dd u|_\sm\leq\vep_0$ and $|D^2u|_\sm\leq\vep_0$, then
    \begin{enumerate}
        \item $\left|(\ta+\kp u)_{;k} - \sum_{i,j}\mu^{ij}u_{;jki}\right| \leq c(|D^2u|_\sm + |\dd u|_\sm)$;\smallskip
        \item $\left|(\ta+\kp u)_{;k\ell} - \sum_{i,j}\mu^{ij}u_{;jki\ell}\right| \leq c(|D^3u|^2_\sm + |D^3u|_\sm + |D^2u|_\sm + |\dd u|_\sm)$;\smallskip
        \item $\left|(\ta+\kp u)_{;k\ell m} - \sum_{i,j}\mu^{ij}u_{;jki\ell m} \right| \leq c(|D^4u|_\sm\cdot|D^3u|_\sm + |D^4u|_\sm + |D^3u|^3_\sm + |D^3u|_\sm + |D^2u|_\sm + |\dd u|_\sm)$;\smallskip
    \end{enumerate}
    for any $k,\ell,m$.
\end{cor}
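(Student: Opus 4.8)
The plan is to read off (i) directly from Lemma~\ref{lem_theta_in_k}, and to obtain (ii) and (iii) by applying the covariant derivative $D$ once, resp.\ twice, to the identity in that lemma and estimating the resulting terms one by one. For (i): in that identity the term $\sum_{i,j}\mu^{ij}u_{;jki}$ is exactly the claimed principal part, while the other two terms are $\sum_i u_{;i}\,\fE^i_k$ and $\sum_{i,j}u_{;ij}\,\fF^{ij}_k$ with $\fE^i_k,\fF^{ij}_k$ \bddd, hence bounded on the region $|y|_\sm\le\vep_0,\ |Q|_\sm\le\vep_0$; so they are $\le c\,|\dd u|_\sm$ and $\le c\,|D^2u|_\sm$ respectively, which is (i).

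The key mechanism for (ii) and (iii) is a chain rule for covariant derivatives of a \bddd\ function evaluated along $F_u$. Working in the fixed finite coordinate cover, if $f=f(x,\dd u,D^2u)$ then $\partial_\ell\big(f(x,\dd u,D^2u)\big)=(\partial_{x^\ell}f)+\sum_i(\partial_{y_i}f)\,\partial_\ell(u_{;i})+\sum_{i,j}(\partial_{Q_{ij}}f)\,\partial_\ell(u_{;ij})$, and $\partial_\ell(u_{;i})$, $\partial_\ell(u_{;ij})$ differ from $u_{;i\ell}$, $u_{;ij\ell}$ only by Christoffel terms (Example~\ref{bddd_example}(i)) contracted with lower derivatives of $u$. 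Since all partial derivatives of $f$ are bounded on the relevant compact region, this yields $|f_{;\ell}|\le c(1+|D^3u|_\sm)$ and, iterating once more, $|f_{;\ell m}|\le c(1+|D^3u|^2_\sm+|D^4u|_\sm)$, where $|\dd u|_\sm,|D^2u|_\sm\le\vep_0\le1$ is used to absorb lower-order products. The same applies to $\mu^{ij}$: by Lemma~\ref{lem_muij} the determinant $\det[\mu_{ij}]$ is bounded above and below, so $\mu^{ij}$ is again \bddd\ and obeys the same two bounds.

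Differentiating the identity of Lemma~\ref{lem_theta_in_k} once produces the new principal part $\sum_{i,j}\mu^{ij}u_{;jki\ell}$ together with $\sum_{i,j}(\mu^{ij})_{;\ell}u_{;jki}\le c(1+|D^3u|_\sm)|D^3u|_\sm$; the terms from $\sum_i u_{;i}\fE^i_k$ are $\sum_i u_{;i\ell}\fE^i_k+\sum_i u_{;i}(\fE^i_k)_{;\ell}$, of size $\le c\big(|D^2u|_\sm+|\dd u|_\sm(1+|D^3u|_\sm)\big)$, and those from $\sum_{i,j}u_{;ij}\fF^{ij}_k$ are $\sum_{i,j} u_{;ij\ell}\fF^{ij}_k+\sum_{i,j}u_{;ij}(\fF^{ij}_k)_{;\ell}$, of size $\le c\big(|D^3u|_\sm+|D^2u|_\sm(1+|D^3u|_\sm)\big)$. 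Using $|\dd u|_\sm,|D^2u|_\sm\le1$ to absorb the mixed products, the total error is $\le c(|D^3u|^2_\sm+|D^3u|_\sm+|D^2u|_\sm+|\dd u|_\sm)$, which is (ii). Part (iii) follows by one more application of $D$ in exactly the same fashion: the cubic term $|D^3u|^3_\sm$ first appears from $\partial_Q^2$ of $\mu^{ij}$ contracted with two copies of $D^3u$ and then multiplied by the leftover factor $u_{;jki}$ of size $|D^3u|_\sm$, while $|D^4u|_\sm|D^3u|_\sm$ comes from pairing $(\mu^{ij})_{;\ell}$ with $u_{;jkim}$ or $(\mu^{ij})_{;\ell m}$ with $u_{;jki}$.

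The only genuine difficulty is bookkeeping: one must track the precise homogeneity in $\dd u,D^2u,D^3u,D^4u$ of every term produced by the chain rule in order to match it against the stated polynomial bounds. There is no analytic subtlety — all coefficient functions and all of their partial derivatives are bounded on the relevant compact region by Assumption~\ref{assume_basic}, Example~\ref{bddd_example} and Lemma~\ref{lem_muij} — and since only two further covariant derivatives are taken, one never needs to estimate $D^5u$ or higher.
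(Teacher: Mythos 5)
Your proposal is correct and follows exactly the route the paper indicates: the paper omits the proof, saying only that the corollary follows by applying $D$ and $D^2$ to the expression in Lemma~\ref{lem_theta_in_k}, and your chain-rule bookkeeping (boundedness of $\fE^i_k$, $\fF^{ij}_k$, $\mu^{ij}$ and their derivatives, with Christoffel corrections from Example~\ref{bddd_example}) is precisely that argument carried out. The only unstated micro-step in (iii) is absorbing the intermediate $|D^3u|_\sm^2$ terms via $|D^3u|_\sm^2\le|D^3u|_\sm+|D^3u|_\sm^3$, which is immediate.
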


For a function $u:L\to\BR$, let $\Gm_{ij}^k(F_u)$ be the Christoffel symbols of its Levi-Civita connection of $\mu_{ij}(x,\dd u, D^2u)$.  It is the restriction of $\ambn$ on the tangent space of $F_u$.  Recall that the difference between two connections is a tensor:
\begin{align}
    \Gm_{ij}^k(F_u) - \Ld_{ij}^k &= \oh\sum_\ell \mu^{k\ell}\left[ (\mu-\sm)_{j\ell;i} + (\mu-\sm)_{i\ell;j} - (\mu-\sm)_{ij;\ell} \right] \notag \\
    &= \oh\sum_\ell \mu^{k\ell}\left[ \mu_{j\ell;i} + \mu_{i\ell;j} - \mu_{ij;\ell} \right] ~, \label{connection_difference}
\end{align}
where the equality uses the fact that $D\sm = 0$.
\begin{lem} \label{lem_diffenece_Laplace}
    There exists a constant $c>0$ with the following property.  Let $u:L\to\BR$ be a smooth function with $|\dd u|_\sm\leq\vep_0$ and $|D^2u|_\sm\leq\vep_0$.  Then
    \begin{align*}
        \left| \Dt_\mu f - \tr_\mu(\Hess_\sm f) \right| &\leq c\,|\dd f|_\sm \cdot (|D^3u|_\sm + |D^2u|_\sm + |\dd u|_\sm)
    \end{align*}
    for any smooth function $f$ on $L$, where $\Dt_\mu$ is the Laplace operator of the metric $\mu_{ij}(x,\dd u, D^2 u)$.
\end{lem}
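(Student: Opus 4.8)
The plan is to reduce the statement to an estimate on the covariant derivative, with respect to $\sm$, of the induced metric $\mu_{ij}(x,\dd u,D^2u)$. Working in one of the coordinate charts $O_\bt$ of Assumption \ref{assume_basic}, both operators have the same second-order part $\sum_{i,j}\mu^{ij}\,\pl^2 f/\pl x^i\pl x^j$, while the first-order parts are governed by the Christoffel symbols $\Gm^k_{ij}(F_u)$ and $\Ld^k_{ij}$ respectively. Hence
\[
    \Dt_\mu f - \tr_\mu(\Hess_\sm f) = -\sum_{i,j,k}\mu^{ij}\bigl(\Gm^k_{ij}(F_u) - \Ld^k_{ij}\bigr)\,\frac{\pl f}{\pl x^k} ~.
\]
Since $|\pl f/\pl x^k|\le c\,|\dd f|_\sm$ on each chart and $\mu^{ij}$ is comparable to $\sm^{ij}$ by Lemma \ref{lem_muij}, it suffices to prove that the connection-difference tensor obeys $|\Gm^k_{ij}(F_u)-\Ld^k_{ij}|\le c(|D^3u|_\sm+|D^2u|_\sm+|\dd u|_\sm)$.

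For this I would use the identity \eqref{connection_difference}, which expresses $\Gm^k_{ij}(F_u)-\Ld^k_{ij}$ as $\oh\sum_\ell\mu^{k\ell}(\mu_{j\ell;i}+\mu_{i\ell;j}-\mu_{ij;\ell})$; as $\mu^{k\ell}$ is bounded, the task becomes to show $|D\mu|_\sm\le c(|D^3u|_\sm+|D^2u|_\sm+|\dd u|_\sm)$. Starting from the decomposition $\mu_{ij}-\sm_{ij}=\sum_k u_{;k}\,\fA^k_{ij}(x,\dd u)+\sum_{k,\ell}u_{;k\ell}\,\fB^{k\ell}_{ij}(x,\dd u,D^2u)$ of Lemma \ref{lem_muij}, I would apply the covariant derivative $D_m$ (using $D\sm=0$) and expand by the chain rule. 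Differentiating the explicit prefactors $u_{;k}$ and $u_{;k\ell}$ produces $D^2u$ and $D^3u$ respectively; differentiating the composed functions $\fA^k_{ij}(x,\dd u(x))$ and $\fB^{k\ell}_{ij}(x,\dd u(x),D^2u(x))$ brings in, besides the partials $\pl_x,\pl_y,\pl_Q$ of $\fA$ and $\fB$, the quantities $\pl_m u_{;p}=u_{;pm}+\Ld^q_{mp}u_{;q}$ (order $D^2u$) and $\pl_m u_{;pq}$ (order $D^3u$ modulo Christoffel corrections). Since $\fA$, $\fB$ and all of their partial derivatives are \bddd\ and $|\dd u|_\sm,|D^2u|_\sm\le\vep_0\le1$, each resulting term is dominated by one of $|\dd u|_\sm$, $|D^2u|_\sm$, $|D^3u|_\sm$, the only cross term being of the form $|D^2u|_\sm\,|D^3u|_\sm$, which is absorbed into $|D^3u|_\sm$. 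Substituting this bound on $|D\mu|_\sm$ back, and using the finiteness of the covering $\{O_\bt\}$ to pass from coordinate components to $\sm$-norms, completes the argument.

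The computation is essentially bookkeeping, and the one delicate point is the chain-rule count just described: one must check that differentiating the $Q$-slot of $\fB$ against $D^2u$ contributes a factor of order $D^3u$ and \emph{nothing higher}, and that after using $|D^2u|_\sm\le1$ on the prefactor $u_{;k\ell}$ this enters the final estimate only linearly, matching the linear dependence on $|D^3u|_\sm$ in the statement. A convenient way to organize this is to first isolate, as an auxiliary identity, that $\mu_{ij;m}=\sum_k u_{;km}\fA^k_{ij}+\sum_{k,\ell}u_{;k\ell m}\fB^{k\ell}_{ij}$ plus a remainder consisting of \bddd\ coefficients times $u_{;k}$ or $u_{;k\ell}$; from this the three-term bound on $|D\mu|_\sm$ is immediate, and the rest proceeds as above.
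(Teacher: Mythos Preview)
Your proposal is correct and follows essentially the same route as the paper: compute $\Dt_\mu f - \tr_\mu(\Hess_\sm f)$ as $\sum_{i,j,k}\mu^{ij}(\Gm^k_{ij}(F_u)-\Ld^k_{ij})\,\pl_k f$ (the paper writes this with the opposite sign, but that is immaterial for the absolute-value estimate), then invoke the connection-difference identity \eqref{connection_difference} together with Lemma \ref{lem_muij} to bound $|D\mu|_\sm$ by $c(|D^3u|_\sm+|D^2u|_\sm+|\dd u|_\sm)$. The paper's proof is simply much terser, leaving implicit the chain-rule bookkeeping that you have spelled out.
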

\begin{proof}
    We compute
    \begin{align*}
        \Dt_\mu f - \tr_\mu(\Hess_\sm f) &= \sum_{i,j} \mu^{ij}\sum_k\left(\Gm_{ij}^k(F_u) - \Ld_{ij}^k\right) \frac{\pl f}{\pl x^k} ~.
    \end{align*}
    By applying \eqref{connection_difference} and Lemma \ref{lem_muij}, this lemma follows.
\end{proof}

\begin{rmk}
    For the maximal principle argument of this paper, both $\Dt_\mu f$ and $\tr_\mu(\Hess_\sm f)$ will work.  We will go with the more geometric one, $\Dt_\mu f$.  If one uses $\tr_\mu(\Hess_\sm f)$, Lemma \ref{lem_diffenece_Laplace} will not be needed.  It seems that $\Dt_\sm$ is not a good choice.  This kind of issue also appears implicitly in \cite{SmoczykTsuiWang19}.
\end{rmk}

\section{Evolution Equations}

Now, suppose that $u:L\times[0,T)\to\BR$ is a smooth function such that the graph of $\dd u(\,\cdot\,t)$ evolves by the mean curvature flow.  As discussed in section \ref{sec_LMCF_potential}, we may assume that $u$ satisfies \eqref{LMCF_potential} with $C(t)\equiv0$.  As before, semicolon means the covariant derivative in $D$. 

\begin{lem} \label{lem_evo_eqn_u}
    There exists a constant $c>0$ with the following property.  Suppose that $u:L\times[0,T)\to\BR$ solves \eqref{LMCF_potential} with $C(t)\equiv0$, and $|\dd u|_\sm\leq\vep_0$, $|D^2 u|_\sm\leq\vep_0$ for all $t\in[0,T)$.  Then,
    \begin{align} \label{evo_eqn_u}
        (\frac{\pl}{\pl t} - \Dt_\mu)u^2 &\leq -|\dd u|_\sm^2 + 2\kp\,u^2 + c|u|(|D^3u|_\sm^2 + |D^2u|_\sm^2 + |\dd u|_\sm^2) ~.
    \end{align}
\end{lem}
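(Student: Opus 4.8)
\emph{Plan.} I would begin from the parabolic Leibniz rule for the evolving induced metric $\mu_{ij}=\mu_{ij}(x,\dd u,D^2u)$ (the metric of which $\Dt_\mu$ is the Laplacian). Since $\Dt_\mu(u^2)=2u\,\Dt_\mu u + 2\,|\dd u|_\mu^2$, where $|\dd u|_\mu^2 := \sum_{i,j}\mu^{ij}u_{;i}u_{;j}$, and $u$ solves \eqref{LMCF_potential} with $C(t)\equiv0$, so that $\frac{\pl u}{\pl t}=\ta(F_u)+\kp u$, one obtains
\begin{align*}
    \Big(\frac{\pl}{\pl t}-\Dt_\mu\Big)u^2 &= 2u\big(\ta(F_u)+\kp u-\Dt_\mu u\big) - 2\,|\dd u|_\mu^2 ~.
\end{align*}
Hence it suffices to establish two things: (i) a bound $|\ta(F_u)-\Dt_\mu u|\leq c(|D^3u|_\sm^2+|D^2u|_\sm^2+|\dd u|_\sm^2)$ for the scalar error term, and (ii) the coercivity $|\dd u|_\mu^2\geq\oh|\dd u|_\sm^2$.

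For (i) I would route through the static Laplacian $\Dt_\sm$. Lemma \ref{lem_diffenece_Laplace} with $f=u$ gives $|\Dt_\mu u-\tr_\mu(\Hess_\sm u)|\leq c\,|\dd u|_\sm(|D^3u|_\sm+|D^2u|_\sm+|\dd u|_\sm)$, while $\tr_\mu(\Hess_\sm u)-\Dt_\sm u=\sum_{i,j}(\mu^{ij}-\sm^{ij})u_{;ij}$ is $O\big((|\dd u|_\sm+|D^2u|_\sm)|D^2u|_\sm\big)$ by Lemma \ref{lem_muij} (which, via $\oh\sm\leq\mu\leq2\sm$, also controls $\mu^{ij}-\sm^{ij}$). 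Combining these with Proposition \ref{prop_angle_asmp}, $|\ta(F_u)-\Dt_\sm u|\leq c(|\dd u|_\sm^2+|D^2u|_\sm^2)$, and using $ab\leq\oh(a^2+b^2)$ to absorb the cross term $|\dd u|_\sm\,|D^3u|_\sm$, yields the bound in (i). For (ii), Lemma \ref{lem_muij} gives $\mu_{ij}\leq2\sm_{ij}$ as symmetric matrices; passing to inverses reverses the inequality, so $\sum_{i,j}\mu^{ij}u_{;i}u_{;j}\geq\oh\sum_{i,j}\sm^{ij}u_{;i}u_{;j}=\oh|\dd u|_\sm^2$, i.e.\ $-2|\dd u|_\mu^2\leq-|\dd u|_\sm^2$. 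Substituting (i) and (ii) into the displayed identity and estimating $2u\cdot(\text{error})$ by $c|u|(|D^3u|_\sm^2+|D^2u|_\sm^2+|\dd u|_\sm^2)$ reproduces \eqref{evo_eqn_u} verbatim.

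\emph{Main obstacle.} The only genuinely non-routine point is the unavoidable appearance of $|D^3u|_\sm$: the evolving Laplacian $\Dt_\mu$ differs from the static $\Dt_\sm$ through the Christoffel difference \eqref{connection_difference}, which contains $\mu_{ij;k}$ and hence third derivatives of $u$, so it cannot be absorbed by $C^2$-smallness alone. What rescues the estimate is the structure in Lemma \ref{lem_diffenece_Laplace}: each third-derivative term is already multiplied by a factor $|\dd u|_\sm$, so Cauchy--Schwarz trades $|\dd u|_\sm\,|D^3u|_\sm$ for $|\dd u|_\sm^2+|D^3u|_\sm^2$, and the resulting $|D^3u|_\sm^2$ survives in \eqref{evo_eqn_u} only with the benign weight $|u|$, which will be small. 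Everything else is bookkeeping with the \bddd\ functions of Section 3.
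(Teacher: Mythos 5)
Your proposal is correct and follows essentially the same route as the paper: the identity $(\frac{\pl}{\pl t}-\Dt_\mu)u^2 = 2u(\ta(F_u)+\kp u-\Dt_\mu u)-2\sum_{i,j}\mu^{ij}u_{;i}u_{;j}$, the coercivity $\sum_{i,j}\mu^{ij}u_{;i}u_{;j}\geq\oh|\dd u|_\sm^2$ from Lemma \ref{lem_muij}, and the triangle inequality through $\Dt_\sm u$ and $\tr_\mu(\Hess_\sm u)$ using Proposition \ref{prop_angle_asmp}, Lemma \ref{lem_muij} and Lemma \ref{lem_diffenece_Laplace}, with Cauchy--Schwarz absorbing the $|\dd u|_\sm|D^3u|_\sm$ cross term. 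Your explicit handling of the third-derivative term is exactly the ``simple manipulation'' the paper leaves implicit.
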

\begin{proof}
    By \eqref{LMCF_potential},
    \begin{align*}
        (\frac{\pl}{\pl t} - \Dt_\mu)u^2 &= -2\sum_{i,j}\mu^{ij}u_{;i}u_{;j} + 2 u\cdot (\ta(F_u) - \Dt _\mu u) + 2\kp u^2 ~.
    \end{align*}
    Due to Lemma \ref{lem_muij}, $2\sum_{i,j}\mu^{ij}u_{;i}u_{;j} \geq |\dd u|_\sm^2$.  According to Proposition \ref{prop_angle_asmp}, Lemma \ref{lem_muij} and Lemma \ref{lem_diffenece_Laplace},
    \begin{align*}
        |\ta(F_u) - \Dt _\mu u| &\leq |\ta(F_u) - \Dt_\sm u| + |\Dt_\sm u - \tr_\mu(\Hess_\sm u)| + |\tr_\mu(\Hess_\sm u) - \Dt_\mu u| \\
        &\leq c'(|\dd u|_\sm^2 + |D^2 u|_\sm^2) + c''(|D^2u|_\sm + |\dd u|_\sm)|D^2u|_\sm \\
        &\quad + c'''|\dd u|_\sm(|D^3u|_\sm + |D^2u|_\sm + |\dd u|_\sm) ~.
    \end{align*}
    With the assumption that  $|\dd u|_\sm\leq\vep_0$, $|D^2 u|_\sm\leq\vep_0$ and some simple manipulation, it finishes the proof of this lemma.
\end{proof}

\begin{lem} \label{lem_evo_eqn_du}
    There exists a constant $c$ which has the following significance.  Suppose that $u:L\times[0,T)\to\BR$ solves \eqref{LMCF_potential}, and $|\dd u|_\sm\leq\vep_0$, $|D^2 u|_\sm\leq\vep_0$ for all $t\in[0,T)$.  Then,
    \begin{align}
        (\frac{\pl}{\pl t} - \Dt_\mu)|\dd u|_\sm^2 &\leq -\oh|D^2 u|_\sm^2 + c(|D^3u|_\sm\cdot|\dd u|_\sm + |\dd u|_\sm^2) \quad\text{and} \label{evo_eqn_du} \\
        (\frac{\pl}{\pl t} - \Dt_\mu)|D^2 u|_\sm^2 &\leq -\oh|D^3u|_\sm^2 + c(|D^3u|_\sm^2\cdot|D^2u|_\sm + |D^2u|_\sm^2 + |\dd u|_\sm^2) ~. \label{evo_eqn_ddu}
    \end{align}
\end{lem}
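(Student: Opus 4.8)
The plan is to derive both evolution inequalities by differentiating the scalar flow equation \eqref{LMCF_potential} with $C(t)\equiv 0$, commuting $\frac{\pl}{\pl t}$ with covariant derivatives, and then controlling the error terms using the asymptotic estimates already established. The guiding principle is that the flow is, to leading order, $\frac{\pl u}{\pl t}\approx \Dt_\sm u$, so $|\dd u|_\sm^2$ and $|D^2u|_\sm^2$ should satisfy Bochner-type inequalities whose bad reaction terms are all quadratic (or higher) in the derivatives of $u$ and hence absorbable when $|\dd u|_\sm$, $|D^2u|_\sm$ are small.

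For \eqref{evo_eqn_du}: write $\frac{\pl}{\pl t}|\dd u|_\sm^2 = 2\sum \sm^{ij} u_{;i}\,\frac{\pl}{\pl t}u_{;j} = 2\sum\sm^{ij}u_{;i}(\ta(F_u)+\kp u)_{;j}$, since the induced metric $\sm$ on $L$ is fixed and $(\frac{\pl}{\pl t}u)_{;j} = (\frac{\pl}{\pl t}u)_{,j}$ commutes with $D$. By Lemma~\ref{lem_theta_in_k}, $(\ta(F_u)+\kp u)_{;j} = \sum\mu^{k\ell}u_{;\ell j k} + (\text{lower order})$. Meanwhile $\Dt_\mu|\dd u|_\sm^2 = \sum\mu^{ab}(|\dd u|_\sm^2)_{;ab}$ expands, via the Bochner formula for the fixed metric $\sm$, into $2\sum\mu^{ab}\sm^{ij}u_{;ia}u_{;jb} + 2\sum\mu^{ab}\sm^{ij}u_{;i}u_{;jab} + (\text{curvature of }\sm)\ast \dd u\ast\dd u$. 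Subtracting, the third-derivative terms $2\sum\mu^{ab}\sm^{ij}u_{;i}u_{;jab}$ match the leading part of $\frac{\pl}{\pl t}$ term up to commuting the last two indices of $u_{;jab}$ (which costs a curvature term quadratic in $\dd u$) and up to the discrepancy between $\mu^{ab}\sm^{ij}$ and $\mu^{aj}\delta^i_b$-type contractions, which by Lemma~\ref{lem_muij} is $O((|y|+|Q|))$ times $|\dd u|_\sm\,|D^3u|_\sm$, i.e. controlled by $c(|D^3u|_\sm|\dd u|_\sm + |\dd u|_\sm^2)$. What survives is $-2\sum\mu^{ab}\sm^{ij}u_{;ia}u_{;jb}$, which by the lower bound $\mu^{ab}\geq \tfrac12\sm^{ab}$ of Lemma~\ref{lem_muij} dominates $-\oh|D^2u|_\sm^2$ (one can afford the extra factor); the lower-order terms from Lemma~\ref{lem_theta_in_k}, paired with $u_{;i}$, contribute $O(|\dd u|_\sm^2 + |\dd u|_\sm|D^2u|_\sm)\leq c(|D^3u|_\sm|\dd u|_\sm + |\dd u|_\sm^2)$ after using $|D^2u|_\sm\leq\vep_0$ and Young's inequality. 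The $\kp u$ term contributes $2\kp\sum\sm^{ij}u_{;i}u_{;j}$ which, since $\kp\leq 0$, has a favorable sign — or can be bounded by $c|\dd u|_\sm^2$ if one does not want to track signs.

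For \eqref{evo_eqn_ddu} the structure is identical but one differential order higher: write $\frac{\pl}{\pl t}|D^2u|_\sm^2 = 2\sum\sm^{ik}\sm^{j\ell}u_{;ij}(\ta(F_u)+\kp u)_{;k\ell}$, insert $(\ta(F_u)+\kp u)_{;k\ell} = \sum\mu^{ab}u_{;bka\ell} + (\text{error})$ from the second item of Corollary~\ref{cor_d_theta_asmp}, and compare with $\Dt_\mu|D^2u|_\sm^2$, whose Bochner expansion yields $2\sum\mu^{ab}\sm^{ik}\sm^{j\ell}u_{;ija}u_{;k\ell b} + 2\sum\mu^{ab}\sm^{ik}\sm^{j\ell}u_{;ij}u_{;k\ell ab} + (\text{curvature of }\sm)\ast D^2u\ast D^2u$. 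The fourth-derivative pieces match after commuting indices (curvature of $\sm$ times $D^2u\ast D^2u$, harmless) and after accounting for the $\mu$-vs-$\sm$ contraction discrepancy (Lemma~\ref{lem_muij}), which here costs $O((|\dd u|_\sm+|D^2u|_\sm))\,|D^2u|_\sm\,|D^3u|_\sm\leq c|D^3u|_\sm^2|D^2u|_\sm$ — precisely the shape of the stated bad term; the surviving good term $-2\sum\mu^{ab}\sm^{ik}\sm^{j\ell}u_{;ija}u_{;k\ell b}\leq -\oh|D^3u|_\sm^2$ by the ellipticity bound on $\mu$. The error terms from Corollary~\ref{cor_d_theta_asmp}(ii), contracted against $u_{;ij}$, give $O(|D^2u|_\sm(|D^3u|_\sm^2 + |D^3u|_\sm + |D^2u|_\sm + |\dd u|_\sm))$; the $|D^2u|_\sm|D^3u|_\sm$ cross term is split via Young into $\tfrac14|D^3u|_\sm^2 + c|D^2u|_\sm^2$ (absorbed by the good term, which is why $-\oh$ rather than $-1$), and the remainder is $\leq c(|D^2u|_\sm^2+|\dd u|_\sm^2)$. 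Again $2\kp|D^2u|_\sm^2$ has the right sign, or is dominated by $c|D^2u|_\sm^2$.

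The main obstacle is the second bullet: carefully commuting the time derivative past the covariant derivatives and then matching the highest-order terms between $\frac{\pl}{\pl t}$ and $\Dt_\mu$ when the two operators are built from \emph{different} metrics ($\sm$ for the norm and $D$, but $\mu$ for the diffusion). Keeping track of which index-permutation and which metric-mismatch produces which error — and verifying that every leftover term is at least quadratic in $(\dd u, D^2u, D^3u)$ with no uncontrolled linear-in-$D^3u$ or linear-in-$D^4u$ piece except the flagged $|D^3u|_\sm|\dd u|_\sm$ and $|D^3u|_\sm^2|D^2u|_\sm$ terms — is the bookkeeping-heavy heart of the argument. Once the leading-order cancellation is pinned down, everything else is a routine application of Lemma~\ref{lem_muij}, Lemma~\ref{lem_theta_in_k}, Corollary~\ref{cor_d_theta_asmp}, the smallness of $\vep_0$, and Young's inequality.
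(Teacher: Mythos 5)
Your overall strategy is the same as the paper's: differentiate \eqref{LMCF_potential}, insert the expansion of $(\ta+\kp u)_{;k}$ from Lemma \ref{lem_theta_in_k} / Corollary \ref{cor_d_theta_asmp}, compare with the second-order operator, commute $\sm$-covariant derivatives at the cost of curvature terms, use the bounds $\oh\sm\leq\mu\leq 2\sm$ of Lemma \ref{lem_muij} for the good gradient terms, and absorb the cross terms by Young's inequality together with $|\dd u|_\sm,|D^2u|_\sm\leq\vep_0$.

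There is, however, one concrete error in the computation: you assert $\Dt_\mu|\dd u|_\sm^2=\sum_{a,b}\mu^{ab}\,(|\dd u|_\sm^2)_{;ab}$ (and the analogue for $|D^2u|_\sm^2$), where the semicolon is the covariant derivative of the fixed metric $\sm$. This identity is false: the right-hand side is $\tr_\mu(\Hess_\sm|\dd u|_\sm^2)$, whereas $\Dt_\mu$ is built from the Levi-Civita connection of $\mu(x,\dd u,D^2u)$, and the two differ by the first-order term $\sum_{i,j,k}\mu^{ij}\bigl(\Gm_{ij}^k(F_u)-\Ld_{ij}^k\bigr)\pl_k f$, cf.\ \eqref{connection_difference}. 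The paper isolates exactly this term in Lemma \ref{lem_diffenece_Laplace}; for $f=|\dd u|_\sm^2$ it contributes an extra $c\,|\dd u|_\sm|D^2u|_\sm(|D^3u|_\sm+|D^2u|_\sm+|\dd u|_\sm)$, and for $f=|D^2u|_\sm^2$ an extra $c\,|D^2u|_\sm|D^3u|_\sm(|D^3u|_\sm+|D^2u|_\sm+|\dd u|_\sm)$. These are of the allowed size, so the stated inequalities survive, but since \eqref{evo_eqn_du}--\eqref{evo_eqn_ddu} are formulated for $\Dt_\mu$ (not for $\tr_\mu\Hess_\sm$), this connection-difference term must appear in the bookkeeping; your closing remark about the ``metric mismatch'' gestures at it, but it never enters your actual estimate. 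A minor further point: the ``discrepancy between $\mu^{ab}\sm^{ij}$ and $\mu^{aj}\delta^i_b$-type contractions'' you invoke at leading order is not actually there --- the top-order terms coming from the time derivative and from $\tr_\mu(\Hess_\sm\,\cdot\,)$ carry the same $\mu\ot\sm$ contraction and match exactly up to curvature commutators --- so that error term is harmless over-counting rather than something that needs to be controlled.
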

\begin{proof}
    {\textit{The equation \eqref{evo_eqn_du}}}.  Differentiating \eqref{LMCF_potential} gives
    \begin{align} \label{du_in_t}
        \frac{\pl}{\pl t}u_{;k} &= (\ta + \kp u)_{;k} ~.
    \end{align}
    We compute
    \begin{align} \begin{split}
        (\frac{\pl}{\pl t} - \Dt_\mu)|\dd u|_\sm^2 &= 2\sum_{k,\ell}\sm^{k\ell}u_{;\ell}\frac{\pl}{\pl t}u_{;k} - \tr_\mu(\Hess_\sm|\dd u|_\sm^2) \\
        &\qquad + (\tr_\mu(\Hess_\sm|\dd u|_\sm^2) - \Dt_\mu|\dd u|_\sm^2) ~.
    \end{split} \label{evo_eqn_du_0} \end{align}
    Due to \eqref{du_in_t} and Corollary \ref{cor_d_theta_asmp} (i) ,
    \begin{align*}
        2\sum_{k,\ell}\sm^{k\ell}u_{;\ell}\frac{\pl}{\pl t}u_{;k} &= 2\sum_{k,\ell}\sm^{k\ell}u_{;\ell}(\ta + \kp u)_{;k} \\
        &\leq 2\sum_{i,j,k,\ell} \mu^{ij}\sm^{k\ell}u_{;\ell}u_{;jki} + c'|\dd u|_\sm(|D^2u|_\sm + |\dd u|_\sm) ~.
    \end{align*}
    By Lemma \ref{lem_muij} and $u_{;jki} - u_{;kij} = u_{;kji} - u_{;kij} = (\text{curvature of }\sm)*\dd u$,
    \begin{align*}
        - \tr_\mu(\Hess_\sm|\dd u|_\sm^2) &= -2\sum_{i,j,k,\ell}\mu^{ij}\sm^{k\ell}(u_{;ki}u_{\ell j} + u_{;kij}u_{;\ell}) \\
        &\leq - |D^2 u|_\sm^2 - 2\sum_{i,j,k,\ell} \mu^{ij}\sm^{k\ell}u_{;\ell}u_{;jki} + c''|\dd u|_\sm^2 ~.
    \end{align*}
    According to Lemma \ref{lem_diffenece_Laplace} and $|D^2u|_\sm \leq \vep_0$,
    \begin{align*}
        \tr_\mu(\Hess_\sm|\dd u|_\sm^2) - \Dt_\mu|\dd u|_\sm^2 &\leq c'''|\dd u|_\sm(|D^3u|_\sm + |D^2u|_\sm + |\dd u|_\sm) ~.
    \end{align*}
    
    Since $|\dd u|_\sm|D^2u|_\sm \leq \dt|D^2u|_\sm^2 + |\dd u|_\sm^2/\dt$ for any $\dt>0$, putting these estimates into \eqref{evo_eqn_du_0} finishes the proof of \eqref{evo_eqn_du}.

    {\textit{The equation \eqref{evo_eqn_ddu}}}.  Differentiating \eqref{du_in_t} gives
    \begin{align} \label{ddu_in_t}
        \frac{\pl}{\pl t}u_{;kp} &= (\ta + \kp u)_{;kp} ~.
    \end{align}
    Similar to \eqref{evo_eqn_du_0},
    \begin{align} \begin{split}
        (\frac{\pl}{\pl t} - \Dt_\mu)|D^2 u|_\sm^2 &= 2\sum_{k,\ell,p,q}\sm^{k\ell}\sm^{pq}u_{;\ell q}\frac{\pl}{\pl t}u_{;kp} - \tr_\mu(\Hess_\sm|D^2 u|_\sm^2) \\
        &\qquad + (\tr_\mu(\Hess_\sm|D^2 u|_\sm^2) - \Dt_\mu|D^2 u|_\sm^2) ~.
    \end{split} \label{evo_eqn_ddu_0} \end{align}
    Due to \eqref{ddu_in_t} and Corollary \ref{cor_d_theta_asmp} (ii) ,
    \begin{align*}
        &\quad 2\sum_{k,\ell,p,q}\sm^{k\ell}\sm^{pq}u_{;\ell q}\frac{\pl}{\pl t}u_{;kp} \\
        &\leq 2\sum_{i,j,k,\ell} \mu^{ij}\sm^{k\ell}\sm^{pq}u_{;\ell q}u_{;jkip} + c'|D^2 u|_\sm (|D^3u|^2_\sm + |D^3u|_\sm + |D^2u|_\sm + |\dd u|_\sm) ~.
    \end{align*}
    By Lemma \ref{lem_muij} and commuting covariant derivatives,
    \begin{align*}
        - \tr_\mu(\Hess_\sm|D^2 u|_\sm^2) &= -2\sum_{i,j,k,\ell,p,q}\mu^{ij}\sm^{k\ell}\sm^{pq}(u_{;kpi}u_{\ell qj} + u_{;kpij}u_{;\ell q}) \\
        &\leq - |D^3 u|_\sm^2 - 2\sum_{i,j,k,\ell} \mu^{ij}\sm^{k\ell}\sm^{pq}u_{;\ell q}u_{;jkip} + c''|D^2 u|_\sm(|D^2 u|_\sm + |\dd u|_\sm) ~.
    \end{align*}
    According to Lemma \ref{lem_diffenece_Laplace},
    \begin{align*}
        \tr_\mu(\Hess_\sm|D^2 u|_\sm^2) - \Dt_\mu|D^2 u|_\sm^2 &\leq c'''|D^3 u|_\sm|D^2 u|_\sm(|D^3u|_\sm + |D^2u|_\sm + |\dd u|_\sm) ~.
    \end{align*}
    It is not hard to see that \eqref{evo_eqn_ddu} follows from these estimates and \eqref{evo_eqn_ddu_0}.
\end{proof}

By combining these quantities, one can produce a monotone quantity along the flow.
\begin{prop} \label{prop_test_psi}
    When the K\"ahler--Einstein constant $\kp\leq0$, there exist positive constants $C_0, C_1\geq1$ and $\vep_1\leq\vep_0$ such that the following holds.  Suppose that $u:L\times[0,T)\to\BR$ solves \eqref{LMCF_potential} with $C(t)\equiv0$.  Let
    \begin{align} \label{test_psi}
        \psi = C_0 u^2 + C_1 |\dd u|_\sm^2 + |D^2u|_\sm^2 ~.
    \end{align}
    If $\psi < (\vep_1)^2$ at $t = 0$, then $\max_{L\times\{t\}}\psi$ is non-increasing in $t$.  Moreover, $\psi$ satisfies
    \begin{align} \label{eqn_psi}
        (\frac{\pl}{\pl t} - \Dt_\mu)\psi &\leq 2C_0\kp u^2 - \frac{1}{4}(C_0|\dd u|_\sm^2 + C_1 |D^2u|_\sm^2 + |D^3u|_\sm^2) ~.
    \end{align}
\end{prop}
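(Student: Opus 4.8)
The plan is to take the linear combination $\psi = C_0 u^2 + C_1|\dd u|_\sm^2 + |D^2u|_\sm^2$ and feed in the three evolution inequalities of Lemmas \ref{lem_evo_eqn_u} and \ref{lem_evo_eqn_du}, choosing a hierarchy of weights $C_0 \gg C_1 \gg 1$ so that every error term gets absorbed into one of the negative leading terms $-|\dd u|_\sm^2$, $-\oh|D^2u|_\sm^2$, $-\oh|D^3u|_\sm^2$; the hypothesis $\kp\leq0$ is what lets us keep the zeroth-order term $2C_0\kp u^2$ verbatim instead of having to absorb it.  Forming $C_0\cdot\eqref{evo_eqn_u} + C_1\cdot\eqref{evo_eqn_du} + \eqref{evo_eqn_ddu}$ and then applying Young's inequality in the form $C_1 c\,|D^3u|_\sm|\dd u|_\sm \leq \tfrac18|D^3u|_\sm^2 + 2c^2C_1^2|\dd u|_\sm^2$ should give
\begin{align*}
(\tfrac{\pl}{\pl t} - \Dt_\mu)\psi
&\leq 2C_0\kp u^2 + \bigl(-C_0 + C_0 c|u| + cC_1 + 2c^2C_1^2 + c\bigr)|\dd u|_\sm^2 \\
&\quad + \bigl(-\tfrac12 C_1 + C_0 c|u| + c\bigr)|D^2u|_\sm^2 + \bigl(-\tfrac38 + C_0 c|u| + c|D^2u|_\sm\bigr)|D^3u|_\sm^2 ~.
\end{align*}

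I would then fix the constants in the following order.  First pick $C_1 \geq \max\{1,8c\}$, so the stray constant $c$ in the $|D^2u|_\sm^2$-coefficient is at most $\tfrac18 C_1$; then pick $C_0 \geq \max\{1, 8(cC_1 + 2c^2C_1^2 + c)\}$, so the $C_1$-dependent constants in the $|\dd u|_\sm^2$-coefficient total at most $\tfrac18 C_0$.  On the region $\{\psi \leq \vep_1^2\}$ we have $|u|\leq\vep_1/\sqrt{C_0}$, $|\dd u|_\sm \leq \vep_1/\sqrt{C_1}$ and $|D^2u|_\sm\leq\vep_1$, so a final choice of $\vep_1 \leq \vep_0$ small (depending only on $C_0, C_1, c$) makes $C_0 c|u| \leq c\sqrt{C_0}\,\vep_1$ and $c|D^2u|_\sm$ negligible, reducing the three bracketed coefficients below $-\tfrac14 C_0$, $-\tfrac14 C_1$, $-\tfrac14$ respectively.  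That is precisely \eqref{eqn_psi}, whose right-hand side is $\leq0$ because $\kp\leq0$ and $C_0>0$.

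To upgrade \eqref{eqn_psi} to hold for all $t\in[0,T)$ and to conclude, I would run a continuity argument: let $t^* \in (0,T]$ be maximal with $\max_{L\times\{s\}}\psi < \vep_1^2$ for $s\in[0,t^*)$, which is nonempty by the hypothesis at $t=0$ and continuity.  On $[0,t^*)$ the bounds $|\dd u|_\sm \leq \vep_0$, $|D^2u|_\sm \leq \vep_0$ hold (since $\vep_1\leq\vep_0$, $C_0,C_1\geq1$), so Lemmas \ref{lem_evo_eqn_u}, \ref{lem_evo_eqn_du} apply and \eqref{eqn_psi} holds there with non-positive right-hand side.  Since $\mu_{ij}$ is uniformly comparable to $\sm_{ij}$ by Lemma \ref{lem_muij}, $\Dt_\mu$ is uniformly elliptic, and the parabolic maximum principle on the compact $L$ forces $\max_{L\times\{s\}}\psi$ to be non-increasing on $[0,t^*)$; hence $\psi \leq \max_{L\times\{0\}}\psi < \vep_1^2$ there, which persists at $t^*$ by continuity and, if $t^* < T$, contradicts maximality.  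So $t^* = T$, $\psi<\vep_1^2$ throughout, \eqref{eqn_psi} holds on $[0,T)$, and $\max_{L\times\{t\}}\psi$ is non-increasing.

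The main obstacle is entirely one of bookkeeping.  The $-\oh|D^3u|_\sm^2$ term is the only negative leading term not carrying a free large weight, so after the Young split just the residual $-\tfrac38$ protects it, and one must verify that the corrections $C_0 c|u|$ and $c|D^2u|_\sm$---small because $|u|$ and $|D^2u|_\sm$ are bounded by $\vep_1$---cannot overwhelm it; this is exactly why the constants must be chosen in the order $C_1$, then $C_0$, then $\vep_1$.
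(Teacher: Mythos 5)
Your proposal is correct and follows essentially the same route as the paper: add the weighted inequalities from Lemmas \ref{lem_evo_eqn_u} and \ref{lem_evo_eqn_du}, absorb the cross terms, fix the constants in the order $C_1$, then $C_0$, then $\vep_1$, and close with a continuity/maximum-principle argument on $\{\psi<\vep_1^2\}$. The only (cosmetic) difference is that you fix the Young split $cC_1|D^3u|_\sm|\dd u|_\sm\leq\tfrac18|D^3u|_\sm^2+2c^2C_1^2|\dd u|_\sm^2$ outright, whereas the paper carries a free parameter $\dt$ and chooses $\dt<1/(10cC_1)$ before choosing $C_0$; the two bookkeeping schemes are equivalent.
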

\begin{proof}
    The key step is to show that the constants $C_0,C_1,\vep_1$ can be chosen so that \eqref{eqn_psi} holds true on $L\times[0,T')$ as long as $\psi < (\vep_1)^2$ on $L\times[0,T')$.  For $C_1\geq1$ and $\vep_1\leq\vep_0$, $\psi < (\vep_1)^2$ implies that $|\dd u|_\sm<\vep_0$ and $|D^2u|_\sm<\vep_0$.
    
    If $\psi < (\vep_1)^2$, $|u|< \vep_1/\sqrt{C_0}$, and \eqref{evo_eqn_u} implies that
    \begin{align*}
        (\frac{\pl}{\pl t} - \Dt_\mu)C_0u^2 &\leq -C_0|\dd u|_\sm^2 + 2C_0\kp\,u^2 + c\sqrt{C_0}\vep_1(|D^3u|_\sm^2 + |D^2u|_\sm^2 + |\dd u|_\sm^2) ~.
    \end{align*}
    Since $|D^3u|_\sm\cdot|\dd u|_\sm \leq \dt|D^3u|_\sm^2 + \dt^{-1}|\dd u|_\sm^2$ for any $\dt>0$, \eqref{evo_eqn_du} leads to
    \begin{align*}
        (\frac{\pl}{\pl t} - \Dt_\mu)C_1|\dd u|_\sm^2 &\leq -\oh C_1|D^2 u|_\sm^2 + cC_1(\dt|D^3u|_\sm^2 + \dt^{-1}|\dd u|_\sm^2 + |\dd u|_\sm^2) ~.
    \end{align*}
    If $\psi < (\vep_1)^2$, $|D^2u|_\sm< \vep_1$, and \eqref{evo_eqn_ddu} implies that
    \begin{align*}
        (\frac{\pl}{\pl t} - \Dt_\mu)|D^2 u|_\sm^2 &\leq -\oh|D^3u|_\sm^2 + c({\vep_1}|D^3u|_\sm^2 + |D^2u|_\sm^2 + |\dd u|_\sm^2) ~.
    \end{align*}
    It follows that
    \begin{align*}
        (\frac{\pl}{\pl t} - \Dt_\mu)\psi &\leq 2C_0\kp u^2 - \left[ C_0 - c\left(\sqrt{C_0}\vep_1 + C_1(1+\dt^{-1}) + 1\right)\right] |\dd u|_\sm^2 \\
        &\quad - \left[ \oh C_1 - c\left(\sqrt{C_0}\vep_1 + 1\right) \right] |D^2u|_\sm^2 - \left[\oh - c\left(\sqrt{C_0}\vep_1 + C_1\dt + {\vep_1}\right)\right] |D^3u|_\sm^2 ~.
    \end{align*}
    First, choose $C_1 > 10c$; second, choose $\dt < 1/(10cC_1)$; next, choose $C_0 > 10c(C_1(1+\dt^{-1}) + 1)$.  Finally, \eqref{eqn_psi} can be obtained by choosing sufficiently small $\vep_1$.

    To finish the proof of this proposition, consider
    \begin{align*}
        T^* &= \sup\{ T'\in(0,T) : \psi < (\vep_1)^2 \text{ on }L\times[0,T')\} ~.
    \end{align*}
    By a maximal principle argument, $T^*$ must be equal to $T$.
\end{proof}

The last gadget needed in the proof of the main theorem is the evolution equation of the third order derivative of $u$, which is more or less equivalent to the second fundamental form of the graph of $\dd u$. 
 The evolution equation of the second fundamental form along the mean curvature flow was derived in \cite{Wang01}*{section 7}.
 With the help of Corollary \ref{cor_d_theta_asmp} (iii), the proof of the following lemma is basically the same as that of Lemma \ref{lem_evo_eqn_du}, and is left as an exercise for the reader.

\begin{lem} \label{lem_evo_eqn_dddu}
    There exists a constant $c>0$ with the following property.  Suppose that $u:L\times[0,T)\to\BR$ solves \eqref{LMCF_potential}, and $|\dd u|_\sm\leq\vep_0$, $|D^2 u|_\sm\leq\vep_0$ for all $t\in[0,T)$.  Then,
    \begin{align}
        (\frac{\pl}{\pl t} - \Dt_\mu)|D^3 u|_\sm^2 &\leq -\oh|D^4u|_\sm^2 + c(|D^3u|_\sm^4 + |D^3u|_\sm^2 + |D^2u|_\sm^2 + |\dd u|_\sm^2) ~. \label{evo_eqn_dddu}
    \end{align}
\end{lem}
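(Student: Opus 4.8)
The plan is to mimic, essentially verbatim, the structure of the proof of \eqref{evo_eqn_ddu} in Lemma \ref{lem_evo_eqn_du}, with all indices shifted up by one order of differentiation and with the extra input of Corollary \ref{cor_d_theta_asmp} (iii) in place of (ii). First I would differentiate the evolution equation \eqref{ddu_in_t} once more in a spatial direction, obtaining
\begin{align*}
    \frac{\pl}{\pl t}u_{;kpr} &= (\ta + \kp u)_{;kpr} ~,
\end{align*}
and then write, as in \eqref{evo_eqn_ddu_0},
\begin{align*}
    (\frac{\pl}{\pl t} - \Dt_\mu)|D^3 u|_\sm^2 &= 2\sum \sm^{\ast\ast}\sm^{\ast\ast}\sm^{\ast\ast}u_{;\ell qs}\frac{\pl}{\pl t}u_{;kpr} - \tr_\mu(\Hess_\sm|D^3 u|_\sm^2) \\
    &\qquad + \left(\tr_\mu(\Hess_\sm|D^3 u|_\sm^2) - \Dt_\mu|D^3 u|_\sm^2\right) ~,
\end{align*}
and then estimate the three groups of terms separately.

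For the first group, I would insert $\frac{\pl}{\pl t}u_{;kpr} = (\ta+\kp u)_{;kpr}$ and apply Corollary \ref{cor_d_theta_asmp} (iii); the leading term is $2\sum \mu^{ij}\sm^{\ast\ast}\sm^{\ast\ast}\sm^{\ast\ast}u_{;\ell qs}u_{;jkipr}$, and the error is controlled by $c|D^3u|_\sm(|D^4u|_\sm\cdot|D^3u|_\sm + |D^4u|_\sm + |D^3u|_\sm^3 + |D^3u|_\sm + |D^2u|_\sm + |\dd u|_\sm)$. For the second group, expanding the $\sm$-Hessian of $|D^3u|_\sm^2$ produces $-2\sum\mu^{ij}\sm^{\ast\ast}\sm^{\ast\ast}\sm^{\ast\ast}(u_{;kpri}u_{;\ell qsj} + u_{;kprij}u_{;\ell qs})$; by Lemma \ref{lem_muij} the first piece dominates $|D^4u|_\sm^2$ up to a constant factor, giving the $-\oh|D^4u|_\sm^2$ after absorbing the factor-of-$2$ discrepancy between $\mu$ and $\sm$ into the smallness of $\vep_0$, while the second piece cancels the leading term from the first group \emph{after} commuting covariant derivatives so that $u_{;jkipr}$ and $u_{;kprij}$ agree up to curvature-of-$\sm$ times lower-order derivatives of $u$ (which contributes terms of the schematic form $|D^3u|_\sm\cdot(|D^2u|_\sm + \cdots)$, hence bounded by the right-hand side of \eqref{evo_eqn_dddu}). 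For the third group, Lemma \ref{lem_diffenece_Laplace} applied with $f = |D^3u|_\sm^2$ gives a bound $c|\dd(|D^3u|_\sm^2)|_\sm(|D^3u|_\sm + |D^2u|_\sm + |\dd u|_\sm) \leq c|D^3u|_\sm|D^4u|_\sm(|D^3u|_\sm + |D^2u|_\sm + |\dd u|_\sm)$.

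Finally I would collect the estimates: every error term is a product containing at least one factor of $|D^3u|_\sm$, $|D^2u|_\sm$, or $|\dd u|_\sm$ (all $\leq\vep_0$) and at most one factor of $|D^4u|_\sm$, so repeated use of Young's inequality $ab\leq\dt a^2 + \dt^{-1}b^2$ lets me absorb the $|D^4u|_\sm^2$-type contributions into the good $-\oh|D^4u|_\sm^2$ term (shrinking it to, say, $-\tfrac14|D^4u|_\sm^2$ or simply keeping $-\oh$ after choosing $\vep_0$ small, which is what the statement records) while the remaining pieces are bounded by $c(|D^3u|_\sm^4 + |D^3u|_\sm^2 + |D^2u|_\sm^2 + |\dd u|_\sm^2)$, exactly \eqref{evo_eqn_dddu}. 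I expect the only real bookkeeping obstacle to be the covariant-derivative commutation in the second group: one must verify that after commuting $u_{;jkipr}$ past three index transpositions to reach $u_{;kprij}$, all the curvature correction terms are genuinely of order $\leq 3$ in derivatives of $u$ (times the fixed curvature tensor of $\sm$ and its derivatives, which are \bddd), so that they land among the harmless terms; this is routine but is the step where a careless count could spuriously produce a $|D^4u|_\sm^2$ without a small coefficient. As this is entirely parallel to the proof of \eqref{evo_eqn_ddu}, the details are left to the reader.
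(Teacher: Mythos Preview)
Your proposal is correct and matches the paper's approach exactly: the paper itself omits the proof, stating only that ``with the help of Corollary \ref{cor_d_theta_asmp} (iii), the proof \ldots\ is basically the same as that of Lemma \ref{lem_evo_eqn_du}, and is left as an exercise for the reader.'' Your outline carries out precisely that exercise, and your identification of the covariant-derivative commutation as the one place requiring care is apt.
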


\section{Proof of the Main Theorem}

We are now ready to prove the dynamical stability theorem.
\begin{thm} \label{thm_main}
    Under Assumption \ref{assume_basic}, when the K\"ahler--Einstein constant $\kp\leq0$, there exists a positive constant $\vep>0$ with the following significance.  Suppose that $u_0:L\to\BR$ is a smooth function satisfying $(u_0)^2+|\dd u_0|_\sm^2 + |D^2 u_0|_\sm^2 < \vep^2$.
    Then, the Lagrangian mean curvature flow starting from $\vph\circ\dd u_0:L\to M$ exists for all time.  Moreover, as $t\to\infty$, the flow converges smoothly to $L\subset M$.
\end{thm}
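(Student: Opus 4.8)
The plan is to recast the Lagrangian mean curvature flow as a scalar parabolic equation, use Proposition~\ref{prop_test_psi} as the single a~priori estimate, bootstrap to long--time existence by standard parabolic regularity, and extract convergence from the differential inequality \eqref{eqn_psi}. First I would recall that, by Section~\ref{sec_LMCF_potential}, the flow starting from $\vph\circ\dd u_0$ is governed in non--parametric form by \eqref{LMCF_potential} with $C(t)\equiv0$, namely $\pl_t u=\ta(F_u)+\kp u$. By Proposition~\ref{prop_angle_asmp} and Lemma~\ref{lem_theta_in_k}, as long as $|\dd u|_\sm$ and $|D^2u|_\sm$ stay small this is a (fully nonlinear) parabolic equation with smooth coefficients whose linearization is a small perturbation of $\Dt_\sm$; differentiating once yields a quasilinear equation with principal part $\sum_{i,j}\mu^{ij}(x,\dd u,D^2u)\pl_i\pl_j$, uniformly elliptic by Lemma~\ref{lem_muij}. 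Hence a smooth initial datum produces a unique smooth solution on a maximal interval $[0,T)$ with $T\in(0,\infty]$.

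Next I would fix $C_0,C_1,\vep_1$ from Proposition~\ref{prop_test_psi} and choose $\vep>0$ with $\max\{C_0,C_1,1\}\,\vep^2<(\vep_1)^2$, so that the hypothesis $(u_0)^2+|\dd u_0|_\sm^2+|D^2u_0|_\sm^2<\vep^2$ forces $\psi<(\vep_1)^2$ at $t=0$. Proposition~\ref{prop_test_psi} then gives $\psi<(\vep_1)^2$ on all of $L\times[0,T)$, so $|u|,|\dd u|_\sm,|D^2u|_\sm<\vep_1\le\vep_0$ throughout; in particular $F_u(\cdot,t)(L)$ stays inside the Weinstein neighborhood and the equation stays uniformly parabolic with uniformly bounded smooth coefficients. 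For long--time existence I would argue by contradiction: if $T<\infty$, the uniform $C^2$ bound together with uniform parabolicity yields, via standard parabolic regularity (differentiate the equation, apply De~Giorgi--Nash--Moser and then Schauder estimates; equivalently, organize the argument through the evolution inequalities of Lemmas~\ref{lem_evo_eqn_du} and \ref{lem_evo_eqn_dddu} together with interpolation), uniform $C^k$ bounds for $u$ on $L\times[\tau,T)$ for every $k$ and $\tau>0$; combined with smoothness on $[0,\tau]$ this lets $u$ be continued smoothly past $T$, contradicting maximality. Hence $T=\infty$, and the same estimates furnish bounds $\sup_{L\times[1,\infty)}|D^ku|_\sm\le C_k$ for all $k$.

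For convergence I would integrate \eqref{eqn_psi} over $L$ against the induced volume form $\dd V_\mu$ of $\mu_{ij}(x,\dd u,D^2u)$. Using $\int_L\Dt_\mu\psi\,\dd V_\mu=0$, the mean curvature flow identity $\tfrac{\pl}{\pl t}\dd V_\mu=-|H|^2\dd V_\mu$, and $\kp\le0$, this gives
\begin{align*}
  \frac{\dd}{\dd t}\int_L\psi\,\dd V_\mu\;\le\;-\frac14\int_L\bigl(C_0|\dd u|_\sm^2+C_1|D^2u|_\sm^2+|D^3u|_\sm^2\bigr)\dd V_\mu\;\le\;0 .
\end{align*}
Since $\int_L\psi\,\dd V_\mu\ge0$ it converges, whence $\int_0^\infty\!\!\int_L\bigl(|\dd u|_\sm^2+|D^2u|_\sm^2+|D^3u|_\sm^2\bigr)\dd V_\mu\,\dd t<\infty$. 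The uniform higher--order bounds from the previous step make $t\mapsto\int_L(|\dd u|_\sm^2+|D^2u|_\sm^2+|D^3u|_\sm^2)\dd V_\mu$ Lipschitz in $t$, so this quantity tends to $0$; controlling its spatial oscillation by the same bounds upgrades this to $\sup_L(|\dd u|_\sm^2+|D^2u|_\sm^2+|D^3u|_\sm^2)\to0$. Therefore $\dd u(\cdot,t)\to0$ in $C^2$, and interpolating with the uniform $C^k$ bounds gives $\dd u(\cdot,t)\to0$ in $C^k$ for every $k$; since the time--$t$ slice of the flow is $\vph(\dd u(\cdot,t)(L))$, it converges smoothly to the zero section, that is, to $L$.

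I expect the convergence step to be the main obstacle, precisely because \eqref{eqn_psi} controls the derivatives $\dd u, D^2u, D^3u$ but not $u$ itself. When $\kp<0$ there is no difficulty: \eqref{eqn_psi} upgrades to $(\pl_t-\Dt_\mu)\psi\le-\ld\psi$ for a fixed $\ld>0$, so $\max_L\psi$ decays exponentially and smooth convergence is immediate. When $\kp=0$ the potential $u$ may genuinely drift by a time--dependent constant (invisible to $\vph\circ\dd u$), so pointwise decay of $\psi$ is false, and one is forced to argue through the space--time integral of the ``good'' gradient terms as above. A secondary technical point is the parabolic bootstrap in the long--time existence step: turning a uniform $C^2$ bound into uniform $C^\infty$ bounds is routine but needs care, and Lemma~\ref{lem_evo_eqn_dddu} is exactly designed to feed that argument.
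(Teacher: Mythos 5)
Your existence argument follows the paper's skeleton up through the uniform bound on $\psi$, but the step you call routine — passing from the uniform $C^2$ bound to uniform higher regularity — is precisely the crux, and the tools you name do not deliver it. Equation \eqref{LMCF_potential} is fully nonlinear; after differentiating once, the resulting equation for $\pl_k u$ has principal coefficients $\mu^{ij}(x,\dd u,D^2u)$ that are only known to be bounded (no modulus of continuity), and it is in nondivergence form, so De~Giorgi--Nash--Moser does not apply; Krylov--Safonov would give only a $C^{\af}$ bound on $\dd u$, which is not enough to start Schauder, and Evans--Krylov is unavailable since the angle $\ta$ is neither convex nor concave in $D^2u$. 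Likewise, inequality \eqref{evo_eqn_dddu} by itself cannot be closed: the term $c|D^3u|_\sm^4$ defeats a direct maximum principle, and no interpolation removes it. The missing idea — which is the heart of the paper's proof — is to apply the maximum principle to $\log(1+|D^3u|_\sm^2)+K\psi$, using the negative gradient terms retained on the right of \eqref{eqn_psi} (with $K$ large and the initial smallness $\vep_2\lesssim 1/K$) to absorb both the quartic term and the cross term $\ip{\dd\log(1+|D^3u|_\sm^2)}{\dd\psi}_\mu$; once $|D^3u|_\sm$ (equivalently the second fundamental form) is bounded, long-time existence and all higher bounds follow from Huisken and standard mean curvature flow theory. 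Without this (or an equivalent $C^{2,\af}$ mechanism) your bootstrap, and hence both $T=\infty$ and the uniform $C^k$ bounds you rely on later, is a genuine gap.

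Your convergence step is a genuinely different route from the paper, which instead extracts a subsequential limit, invokes analyticity of K\"ahler--Einstein metrics together with Simon's \L ojasiewicz--Simon theorem for full convergence, and identifies the limit via exponential decay of $\psi$ when $\kp<0$ and via Thomas--Yau uniqueness when $\kp=0$; your integral argument, if completed, is attractive because it pins the limit down as $L$ in both cases at once (and the paper's closing remark sketches a related integral alternative for $\kp=0$). However, as written it contains an error: for the graph parametrization $F_u=\vph\circ\dd u$, the velocity $\pl_t F_u$ points along the fibers and has a nonzero tangential part $X^T$, so $\pl_t\,\dd V_\mu=(-|H|^2+\mathrm{div}_{F_u}X^T)\,\dd V_\mu$, not $-|H|^2\dd V_\mu$. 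The extra term can be repaired — integrate by parts to get $-\int_L\ip{\dd\psi}{X^T}\dd V_\mu$, note $|\dd\psi|\leq c\vep(|\dd u|_\sm+|D^2u|_\sm+|D^3u|_\sm)$ and $|X^T|\leq c(|\dd u|_\sm+|D^2u|_\sm+|D^3u|_\sm)$ by Corollary \ref{cor_d_theta_asmp}(i), and absorb the product into the good terms after shrinking $\vep$ — but this bookkeeping must be done, and the constant $\tfrac14$ in your displayed inequality must be degraded accordingly.
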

\begin{proof}
Let $C_0,C_1$ and $\vep_1$ be the constants given by Proposition \ref{prop_test_psi}.  Suppose that the initial condition satisfies
\begin{align*}
    C_0(u_0)^2 + C_1|\dd u_0|_\sm^2 + |D^2u_0|_\sm^2 < (\vep_2)^2
\end{align*}
for some $\vep_2\leq\vep_1$; the precise value of $\vep_2$ will be determined later.

Let $T$ be the maximal existence time of the smooth solution $u$ to \eqref{LMCF_potential} (with $C(t)\equiv0$) with initial condition $u_0$.  According to Proposition \ref{prop_test_psi}, $\psi$ defined by \eqref{test_psi} is always less than $(\vep_2)^2$.

{\textit{Uniform boundedness of $D^3u$}}.
With the equation of $\psi$, we employ the trick in \cite{Wang02}*{section 4} to bound $D^3u$.
For any $K>0$, consider the function $\log(1+|D^3 u|^2_\sm) + K\psi$.
It follows from \eqref{evo_eqn_dddu} that
\begin{align*}
    &\quad (\frac{\pl}{\pl t} - \Dt_\mu)\log(1+|D^3 u|^2_\sm) \\
    &\leq \frac{1}{1+|D^3 u|_\sm^2}\left[ -\oh|D^4u|_\sm^2 + c_1(|D^3u|_\sm^4 + |D^3u|_\sm^2 + |D^2u|_\sm^2 + |\dd u|_\sm^2) \right] \\
    &\quad + \ip{\dd\log(1+|D^3u|_\sm^2)}{\dd\left(\log(1+|D^3 u|^2_\sm) + K\psi\right)}_\mu - K\ip{\dd\log(1+|D^3u|_\sm^2)}{\dd\psi}_\mu
\end{align*}
where $\ip{\dd f}{\dd \td{f}}_\mu = \sum_{i,j}\mu^{ij}f_{;i}\td{f}_{;j}$.  By Lemma \ref{muij} and the fact that $\psi < (\vep_2)^2$,
\begin{align*}
    &\quad K\left| \ip{\dd\log(1+|D^3u|_\sm^2)}{\dd\psi}_\mu \right| \\
    &\leq \frac{Kc_2}{1+|D^3u|_\sm^2}|D^3u|_\sm\,|D^4u|_\sm\left(|D^3u|_\sm\,|D^2u|_\sm + |D^2u|_\sm\,|\dd u|_\sm + |\dd u|_\sm\,|u|\right) \\
    &\leq \frac{Kc_3}{1+|D^3u|_\sm^2} \left( \vep_2|D^3u|_\sm^2\,|D^4u|_\sm + (\vep_2)^2|D^3u|_\sm\,|D^4u|_\sm \right) \\
    &\leq \frac{1}{1+|D^3u|_\sm^2} \left( \frac{1}{4}|D^4u|_\sm^2 + c_4K^2(\vep_2)^2(|D^3u|_\sm^4 + |D^3u|_\sm^2) \right)~.
\end{align*}

These estimates with \eqref{eqn_psi} imply that
\begin{align*}
    &\quad (\frac{\pl}{\pl t} - \Dt_\mu)(\log(1+|D^3 u|^2_\sm) + K\psi) \\
    &\leq  (c_1 + c_4 (K\vep_2)^2) |D^3u|_\sm^2 + c_1(|D^2u|_\sm^2 + |\dd u|_\sm^2) - \frac{1}{4}\frac{|D^4u|_\sm^2}{1+|D^3 u|_\sm^2} \\
    &\quad - \frac{K}{4} (|D^3u|_\sm^2 + |D^2u|_\sm^2 + |\dd u|_\sm^2) \\
    &\quad + \ip{\dd\log(1+|D^3u|_\sm^2)}{\dd\left(\log(1+|D^3 u|^2_\sm) + K\psi\right)}_\mu ~.
\end{align*}
By picking $K > 8c_1$ and $\vep_2 < \sqrt{\frac{c_1}{c_4}}\frac{1}{K}$, one finds that $\max_{L\times\{t\}}(\log(1+|D^3 u|^2_\sm) + K\psi)$ is non-increasing in $t$.  It follows that $|D^3u|_\sm$ is uniformly bounded on $L\times[0,T)$.

Since $|\dd u|_\sm$, $|D^2 u|_\sm$ and $|D^3u|_\sm$ are all uniformly bounded, the second fundamental form of the graph of $\dd u$ is uniformly bounded.  It follows from the work of Huisken \cite{Huisken90} that the mean curvature flow exists for all time, $T = \infty$.

Moreover, the standard theory of mean curvature flow implies that all the higher order derivatives of the second fundamental form are uniformly bounded; see for instance \cite{Baker10}*{Proposition 4.8}.  In the current setting, it is equivalent to the boundedness of $\sup_{L\times[0,\infty)}|D^{\ell}u|_\sm$ for any $\ell\geq0$.

{\textit{Convergence}}.  With the boundedness of $|D^{\ell}u|_\sm$, there exists a time sequence $t_j\to\infty$ such that $u(\,\cdot\,,t_j)$ converges smoothly to $u_\infty$ as $j\to\infty$.  Thus, the graph of $\dd u(\,\cdot\,,t_j)$ converges smoothly to the graph of $\dd u_\infty$.

It is known that a K\"ahler--Einstein metric is analytic; see \cites{Morrey1958, DH1988}.  Note that the mean curvature flow is the negative gradient flow of the volume functional.  With the analyticity of the metric, the general theorem of Simon \cite{Simon83}*{Corollary 2} implies that the graph of $\dd u(\,\cdot\,t)$ converges smoothly to the graph of $\dd u_\infty$ as $t\to\infty$.

{\textit{The limit}}.  When $\kp < 0$, it follows from \eqref{eqn_psi} that $\psi$ decays to $0$ exponentially, and thus $u_\infty \equiv 0$.

When $\kp = 0$, it follows from \eqref{Weinstein_cpx_volumeform} that $\Om_\vph$ is holomorphic volume form of constant magnitude, and $U\subset T^*L$ is a Calabi--Yau manifold.  The graph of $\dd u_\infty$ is a minimal Lagrangian.  The function $u_\infty$ can be viewed as a function on the ambient space $U\subset T^*L$.  It is not hard to see that it gives a time-independent Hamiltonian isotopy between $(\dd u_\infty)(L)$ and $L$.  By \cite{ThomasYau02}*{Lemma 4.2}, $(\dd u_\infty)(L)$ coincides with $L$, and $u_\infty$ is a constant function.  It finishes the proof of this theorem.
\end{proof}

We remark that the Thomas--Yau uniqueness theorem invoked in the last step has further generalizations; especially by Imagi \cite{Imagi23} and Li \cite{YLi22}.

\begin{rmk}
    When $\kp = 0$, one may also apply Proposition \ref{prop_test_psi} to conclude the limit is $L$.  Replace $u$ by $\td{u} = u-\int_L u\dd V_L/\int_L 1\dd V_L$, where $\dd V_L$ is the volume form of $(L,\sm)$.  It follows that $\int_L \td{u}\,\Dt_\sm \td{u}\,\dd V_L \leq -\ld_1\,\int_L\td{u}^2\,\dd V_L$.  Note that $\Dt_\sm\td{u} - \Dt_\mu\td{u} = \Dt_\sm{u} - \Dt_\mu{u}$ can be bounded by Lemma \ref{lem_muij}.  By integrating \eqref{eqn_psi} over $L$, one can show that $\int_L\td{u}^2\dd V_L$ decays to zero exponentially, provided that $\vep_2$ is sufficiently small.
\end{rmk}

\subsection{Positive K\"ahler--Einstein constant}

We finish this paper by mentioning the case when $\kp > 0$.  In this case, the formula \eqref{Oh_formula} of Oh \cites{Oh90, Oh93} implies that a minimal Lagrangian is Hamiltonian stable if and only if the first eigenvalue of $\Dt_L$ is no less than $\kp$.  Oh also showed that $\BR\BP^n$ and the Clifford torus are Hamiltonian stable in $\BC\BP^n$, and their first eigenfunctions do give \emph{integrable} variations.  That is to say, for any first eigenfunction $\psi$, there exists a one-parameter family of Hamiltonian isotopic minimal Lagrangians $\{L_s\}_{-\epsilon<s<\epsilon}$ with $L_0 = L$, and $\om(\frac{\pl L_s}{\pl s}|_{s=0},\,\cdot\,)|_L$ is $\dd\psi$.

In the situation of Hamiltonian stable minimal Lagrangian whose first eigenfunctions are integrable, one can apply the result of Simon \cite{Simon83}*{Theorem 2} to conclude the $C^3$ (in potential $u_0$) dynamical stability.  Note that the limit may be a different minimal Lagrangian from the original one.  For instance, if one perturbs the equator of the round $2$-sphere so that it divides the sphere into two components of the same area, the curve will remain so along the mean curvature flow.  Thus, it will not shrink to a point, but it may converge to a different equator as $t\to\infty$.

It would be interesting to see whether the dynamical stability can be improved to $C^2$-closeness in potential when $\kp>0$.

\begin{bibdiv}
\begin{biblist}

\bib{Baker10}{book}{
   author={Baker, Charles},
   title={The mean curvature flow of submanifolds of high codimension},
   note={Thesis (Ph.D.)--Australian National University},
   date={2010},
}

\bib{DH1988}{article}{
   author={Delano\"{e}, Ph.},
   author={Hirschowitz, A.},
   title={About the proofs of Calabi's conjectures on compact K\"{a}hler manifolds},
   journal={Enseign. Math. (2)},
   volume={34},
   date={1988},
   number={1-2},
   pages={107--122},
}

\bib{HarveyLawson82}{article}{
   author={Harvey, Reese},
   author={Lawson, H. Blaine, Jr.},
   title={Calibrated geometries},
   journal={Acta Math.},
   volume={148},
   date={1982},
   pages={47--157},
}




\bib{Huisken90}{article}{
   author={Huisken, Gerhard},
   title={Asymptotic behavior for singularities of the mean curvature flow},
   journal={J. Differential Geom.},
   volume={31},
   date={1990},
   number={1},
   pages={285--299},
}

\bib{Imagi23}{article}{
   author={Imagi, Yohsuke},
   title={Generalized Thomas-Yau uniqueness theorems},
   conference={
      title={Birational geometry, K\"{a}hler-Einstein metrics and degenerations},
   },
   book={
      series={Springer Proc. Math. Stat.},
      volume={409},
      publisher={Springer},
   },
   date={2023}, 
   pages={323--342},
}

\bib{JL23}{article}{
   author={Jin, Xishen},
   author={Liu, Jiawei},
   title={Stability of the generalized Lagrangian mean curvature flow in the cotangent bundle},
   journal={},
   volume={},
   date={},
   number={},
   pages={},
   status={preprint},
}

\bib{YLi22}{article}{
   author={Li, Yang},
   title={Quantitative Thomas-Yau uniqueness},
   journal={},
   volume={},
   date={},
   number={},
   pages={},
   eprint={arXiv:2207.08047},
   status={preprint},
}

\bib{LotaySchulze20}{article}{
   author={Lotay, Jason D.},
   author={Schulze, Felix},
   title={Consequences of strong stability of minimal submanifolds},
   journal={Int. Math. Res. Not. IMRN},
   date={2020},
   number={8},
   pages={2352--2360},
}


\bib{McDuffSalamon17}{book}{
   author={McDuff, Dusa},
   author={Salamon, Dietmar},
   title={Introduction to symplectic topology},
   series={Oxford Graduate Texts in Mathematics},
   edition={3},
   publisher={Oxford University Press}, 
   date={2017},
   pages={xi+623},
}


\bib{Morrey1958}{article}{
   author={Morrey, Charles B., Jr.},
   title={On the analyticity of the solutions of analytic non-linear elliptic systems of partial differential equations. I. Analyticity in the interior},
   journal={Amer. J. Math.},
   volume={80},
   date={1958},
   pages={198--218},
}

\bib{Kodaira06}{book}{
   author={Morrow, James},
   author={Kodaira, Kunihiko},
   title={Complex manifolds},
   note={Reprint of the 1971 edition with errata},
   publisher={AMS Chelsea Publishing}, 
   date={2006},
   pages={x+194},
}

\bib{Neves13}{article}{
   author={Neves, Andr\'{e}},
   title={Finite time singularities for Lagrangian mean curvature flow},
   journal={Ann. of Math. (2)},
   volume={177},
   date={2013},
   number={3},
   pages={1029--1076},
}

\bib{Oh90}{article}{
   author={Oh, Yong-Geun},
   title={Second variation and stabilities of minimal Lagrangian submanifolds in K\"{a}hler manifolds},
   journal={Invent. Math.},
   volume={101},
   date={1990},
   number={2},
   pages={501--519},
}

\bib{Oh93}{article}{
   author={Oh, Yong-Geun},
   title={Volume minimization of Lagrangian submanifolds under Hamiltonian
   deformations},
   journal={Math. Z.},
   volume={212},
   date={1993},
   number={2},
   pages={175--192},
}

\bib{Oh94}{article}{
   author={Oh, Yong-Geun},
   title={Mean curvature vector and symplectic topology of Lagrangian submanifolds in Einstein-K\"{a}hler manifolds},
   journal={Math. Z.},
   volume={216},
   date={1994},
   number={3},
   pages={471--482},
}


\bib{Simon83}{article}{
   author={Simon, Leon},
   title={Asymptotics for a class of nonlinear evolution equations, with applications to geometric problems},
   journal={Ann. of Math. (2)},
   volume={118},
   date={1983},
   number={3},
   pages={525--571},
}

\bib{Smoczyk96}{article}{
   author={Smoczyk, Knut},
   title={A canonical way to deform a Lagrangian submanifold},
   journal={},
   volume={},
   date={},
   number={},
   pages={},
   eprint={arXiv:dg-ga/9605005},
   status={preprint},
}


\bib{SmoczykTsuiWang19}{article}{
   author={Smoczyk, Knut},
   author={Tsui, Mao-Pei},
   author={Wang, Mu-Tao},
   title={Generalized Lagrangian mean curvature flows: the cotangent bundle case},
   journal={J. Reine Angew. Math.},
   volume={750},
   date={2019},
   pages={97--121},
}

\bib{SmoczykWang11}{article}{
   author={Smoczyk, Knut},
   author={Wang, Mu-Tao},
   title={Generalized Lagrangian mean curvature flows in symplectic manifolds},
   journal={Asian J. Math.},
   volume={15},
   date={2011},
   number={1},
   pages={129--140},
}


\bib{ThomasYau02}{article}{
   author={Thomas, R. P.},
   author={Yau, S.-T.},
   title={Special Lagrangians, stable bundles and mean curvature flow},
   journal={Comm. Anal. Geom.},
   volume={10},
   date={2002},
   number={5},
   pages={1075--1113},
}

\bib{TsaiWang20}{article}{
   author={Tsai, Chung-Jun},
   author={Wang, Mu-Tao},
   title={A strong stability condition on minimal submanifolds and its implications},
   journal={J. Reine Angew. Math.},
   volume={764},
   date={2020},
   pages={111--156},
}

\bib{Wang01}{article}{
   author={Wang, Mu-Tao},
   title={Mean curvature flow of surfaces in Einstein four-manifolds},
   journal={J. Differential Geom.},
   volume={57},
   date={2001},
   number={2},
   pages={301--338},
}

\bib{Wang02}{article}{
   author={Wang, Mu-Tao},
   title={Long-time existence and convergence of graphic mean curvature flow in arbitrary codimension},
   journal={Invent. Math.},
   volume={148},
   date={2002},
   number={3},
   pages={525--543},
}

\bib{Uhlenbeck82}{article}{
   author={Uhlenbeck, Karen K.},
   title={Removable singularities in Yang-Mills fields},
   journal={Comm. Math. Phys.},
   volume={83},
   date={1982},
   number={1},
   pages={11--29},
}

\end{biblist}
\end{bibdiv}

\end{document}